\theoremstyle{plain}
\newtheorem{theo}{Theorem}
\newtheorem{problem}[theo]{Problem}
\newtheorem{lem}[theo]{Lemma}
\newtheorem{cor}[theo]{Corollary}
\newtheorem{Cor}[theo]{Corollary}
\newtheorem{rem}[theo]{Remark}
\theoremstyle{definition}
\newtheorem{defi}[theo]{Definition}
\theoremstyle{definition}
\newcommand{\N}{\ensuremath{\mathbb{N}}}
\newcommand{\sm}{\ensuremath{\smallsetminus}}
\newcommand{\sub}{\subseteq}
\newcommand{\COMMENT}[1]{}
\newcommand{\HF}{\ensuremath{\mathcal H}}
\newcommand{\PF}{\ensuremath{\mathcal P}}
\newcommand{\RF}{\ensuremath{\mathcal R}}
\newcommand{\SF}{\ensuremath{\mathcal S}}
\newcommand{\TF}{\ensuremath{\mathcal T}}
\newenvironment{txteq*}
  {
    \begin{equation*}
    \begin{minipage}[c]{0.85\textwidth} 
    \em                                
  }
  {\end{minipage}\end{equation*}\ignorespacesafterend}
\begin{document}
\title{Edge-disjoint double rays in infinite graphs:\\ a Halin type result}
\author{
Nathan Bowler\footnote{n.bowler1729@gmail.com. Research supported by the Alexander von Humboldt Foundation.}
\and Johannes Carmesin\footnote{johannes.carmesin@math.uni-hamburg.de. Research supported by the Studienstiftung des deutschen Volkes.}
\and Julian Pott\footnote{Research supported by the Deutsche Forschungsgemeinschaft.}}
\date{Fachbereich Mathematik\\ Universit\"at Hamburg\\ Hamburg, Germany}

\maketitle

\begin{abstract}
We show that any graph that contains $k$ edge-disjoint double rays for any $k\in\N$ contains also infinitely many edge-disjoint double rays.
This was conjectured by Andreae in 1981.
\end{abstract}
\section{Introduction}
We say a graph $G$ has \emph{arbitrarily many vertex-disjoint $H$} if for every $k\in\N$ there is a family of~$k$ vertex-disjoint subgraphs of~$G$ each of which is isomorphic to~$H$.
Halin's Theorem says that every graph that has arbitrarily many vertex-disjoint rays, also has infinitely many vertex-disjoint rays~\cite{halin64}.
In 1970 he extended this result to vertex-disjoint double rays~\cite{halin70}.
Jung proved a strengthening of Halin's Theorem where the initial vertices of the rays are constrained to a certain vertex set~\cite{jung69}.

We look at the same questions with `edge-disjoint' replacing `vertex-disjoint'.
Consider first the statement corresponding to Halin's Theorem.
It suffices to prove this statement in locally finite graphs, as each graph with arbitrarily many edge-disjoint rays contains a locally finite union of tails of these rays.
But the statement for locally finite graphs follows from Halin's original Theorem applied to the line-graph.

This reduction to locally finite graphs does not work for Jung's Theorem or for Halin's statement about double rays.
Andreae proved an analog of Jung's Theorem for edge-disjoint rays in 1981, and conjectured that a Halin-type Theorem would be true for edge-disjoint double rays~\cite{andreae81}.
Our aim in the current paper is to prove this conjecture.

More precisely, we say a graph $G$ has \emph{arbitrarily many edge-disjoint $H$} if for every $k\in\N$ there is a family of~$k$ edge-disjoint subgraphs of~$G$ each of which is isomorphic to~$H$, and our main result is the following.

\begin{theo}\label{main}
Any graph that has arbitrarily many edge-disjoint double rays has infinitely many edge-disjoint double rays.
\end{theo}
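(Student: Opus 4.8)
The overall plan is to produce a single infinite family of pairwise edge-disjoint double rays, and the key is to understand where the two tails of a double ray can run, i.e.\ the \emph{ends} of $G$. \emph{Reductions.} First I would pass to a countable subgraph: for each $k$ fix a family $\mathcal D_k$ of $k$ edge-disjoint double rays and set $H=\bigcup_{k\in\N}\bigcup\mathcal D_k$, which is countable and still has arbitrarily many edge-disjoint double rays. Next I would reduce to a connected graph. If some component of $H$ has arbitrarily many edge-disjoint double rays, replace $H$ by that component. Otherwise no single component has arbitrarily many; if moreover only finitely many components contained a double ray at all, the size of every $\mathcal D_k$ would be bounded, a contradiction. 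Hence infinitely many components contain a double ray, and choosing one from each already yields an infinite edge-disjoint family. So I may assume $G$ is countable and connected.

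\emph{An end dichotomy.} For an end $\omega$ write $d(\omega)\in\N\cup\{\infty\}$ for the maximal number of edge-disjoint rays belonging to $\omega$. Each double ray splits into two rays (its tails), each belonging to some end, and $k$ edge-disjoint double rays yield $2k$ edge-disjoint rays. Thus if $G$ had only finitely many ends $\omega_1,\dots,\omega_m$, all of finite degree, then every family of edge-disjoint double rays would have size at most $\tfrac12\sum_i d(\omega_i)<\infty$, contradicting the hypothesis. Therefore either (A) some end has $d(\omega)=\infty$, or (B) $G$ has infinitely many ends. I would treat these separately, and I expect (B) to be the hard case.

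\emph{Case (A).} Here I would build the family directly. Infinite edge-degree of $\omega$ means no finite edge set separates $\omega$ from infinity, so the property is robust: after deleting any finite edge set and any finitely many rays, $\omega$ still has infinite degree in what remains. I would construct double rays $D_1,D_2,\dots$ one at a time, recording the finitely many edges used so far. At step $i$, working in $G$ minus the finitely many rays already built and minus the finitely many connecting edges already used, pick two edge-disjoint rays $R,R'$ in $\omega$; since both belong to $\omega$ they lie in a common component of this reduced graph, so a finite path joins them, and truncating $R,R'$ at its endpoints gives a double ray $D_i$ edge-disjoint from all previous ones. Robustness of $d(\omega)=\infty$ guarantees the process never gets stuck.

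\emph{Case (B).} Now every end has finite degree but there are infinitely many ends, and here the hypothesis that double rays are \emph{plentiful} must be used, since infinitely many ends alone need not force even one double ray to be spare. The natural strategy is again to delete a double ray and continue, so the crux is a \textbf{deletion lemma}: one can choose a double ray $D$, with both tails running to ends of finite degree, such that $G\setminus E(D)$ still has arbitrarily many edge-disjoint double rays; iterating it then produces the desired infinite family. I expect the proof of this lemma to be the main obstacle, because deleting $D$ removes infinitely many edges, so one must show that the supply of edge-disjoint double rays, infinite because infinitely many ends are available, drops by only a bounded amount. Making this precise seems to require a max-flow/min-cut type analysis against the end structure: routing double rays through a tree-like decomposition so that pairs of distinct ends can be linked by \emph{edge-disjoint} finite connectors (a Menger-type linkage), and checking that a single connector together with its two tails blocks only finitely much of this capacity. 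Thin cuts, where all access to infinitely many ends funnels through a small edge set, would be handled by passing to the appropriate side, which remains connected with infinitely many ends.
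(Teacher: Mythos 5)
Your reductions (countable, connected) and your end dichotomy are fine, but your Case (A) contains a fatal gap, and it is not a fixable detail: your argument there uses \emph{only} the assumption $d(\omega)=\infty$ and never the hypothesis that there are arbitrarily many edge-disjoint double rays, so if it worked it would prove that every graph with an end of infinite edge-degree has infinitely many edge-disjoint double rays. That statement is false. Take vertices $v_1,v_2,\dots$ and, between $v_i$ and $v_{i+1}$, a bundle of $i$ internally disjoint paths of length two. This graph is connected and locally finite with a unique end $\omega$, and $\omega$ has infinite edge-degree (indeed there are infinitely many pairwise edge-disjoint rays, each using its own ``colour'' of bundle paths). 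But the graph contains \emph{no double ray at all}: each $v_i$ is a cutvertex, so both tails of a double ray would have to pass through every sufficiently large $v_i$, which is impossible since a double ray cannot repeat a vertex. (This is exactly the phenomenon behind Figure~1 of the paper, the reason the line-graph trick fails for double rays.) Concretely, the step that breaks is ``truncating $R,R'$ at the endpoints of a connecting path gives a double ray'': two \emph{edge}-disjoint rays in the same end may share infinitely many vertices — in the example above they must, since $\omega$ has vertex-degree $1$ — and then their union with a path is nothing like a double ray. To build a double ray you need two \emph{vertex}-disjoint rays, and infinite edge-degree gives you no such pair. Separately, your ``robustness'' justification is a non sequitur: ``no finite edge set separates $\omega$ from infinity'' protects only against deleting finitely many edges, whereas each step of your recursion deletes the infinitely many edges of a previously built double ray.

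Your Case (B) has the difficulty exactly backwards, which compounds the problem because the genuinely hard case sits inside your Case (A). For a connected graph, infinitely many ends alone \emph{do} force infinitely many edge-disjoint double rays, with no capacity or flow analysis and no use of the hypothesis: one proves it for trees (in a tree with at least three ends there is a double ray whose edge-removal leaves a component with infinitely many ends, and one recurses) and then applies this to a suitable spanning tree; this is the paper's Lemma~\ref{infinitely_many_ends} and Corollary~\ref{lem:inf_ends}. So the Menger/max-flow ``deletion lemma'' you conjecture is unnecessary there — and as written your treatment of (B) is in any case only a statement of intent, not a proof. The real work, which your proposal never addresses, is the case of finitely many ends all of finite \emph{vertex}-degree (which by your own counting argument still admits ends of infinite edge-degree). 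There the paper splits according to whether the arbitrarily many edge-disjoint double rays join two distinct ends (Section~\ref{two_ended}: Andreae's theorem plus Ramsey and Menger arguments) or one end to itself (Section~\ref{one_end}): in the latter case one passes to $2$-rays, i.e.\ pairs of vertex-disjoint rays, reduces to a locally finite one-ended graph, and runs a Ramsey-type analysis of the ``shapes'' that the $2$-rays induce on a sequence of finite-order separations capturing the end, finally reassembling infinitely many edge-disjoint $2$-rays from segments of different ones. That machinery is precisely what replaces the greedy deletion which, as the example above shows, cannot work.
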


Even for locally finite graphs this theorem does not follow from Halin's analogous result for vertex-disjoint double rays applied to the line graph.
For example a double ray in the line graph may correspond, in the original graph, to a configuration as in Figure~\ref{bad_double_ray}.
\begin{figure}[h]
\begin{center}
\includegraphics{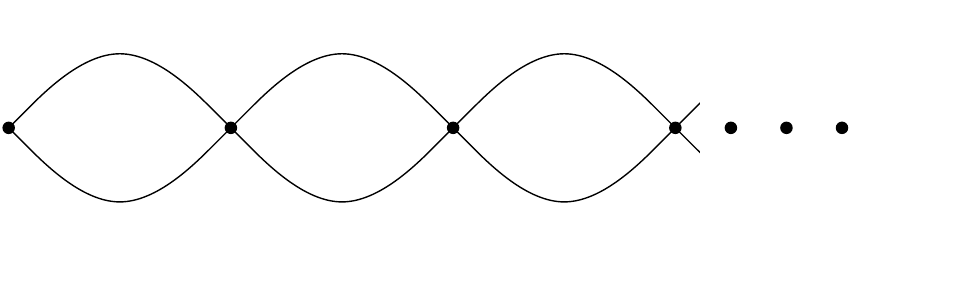}
\end{center}
\caption{A graph that does not include a double ray but whose line graph does.}\label{bad_double_ray}
\end{figure}

A related notion is that of ubiquity.
A graph $H$ is \emph{ubiquitous} with respect to a graph relation $\le$ if $nH\le G$ for all $n\in\N$ implies $\aleph_0 H\le G$, where $nH$ denotes the disjoint union of $n$ copies of~$H$.
For example, Halin's Theorem says that rays are ubiquitous with respect to the subgraph relation.
It is known that not every graph is ubiquitous with respect to the minor relation~\cite{andreae02}, nor is every locally finite graph ubiquitous with respect to the subgraph relation~\cite{lake76, woodall76}, or even the topological minor relation~\cite{andreae02, andreae13}.
However, Andreae has conjectured that every locally finite graph is ubiquitous with respect to the minor relation~\cite{andreae02}.
For more details see~\cite{andreae13}.
In Section~\ref{outlook} (the outlook) we introduce a notion closely related to ubiquity.

The proof is organised as follows.
In Section~\ref{known_cases} we explain how to deal with the cases that the graph has infinitely many ends, or an end with infinite vertex-degree.
In Section~\ref{two_ended} we consider the `two ended' case:
That in which there are two ends $\omega$ and $\omega'$ both of finite vertex-degree, and arbitrarily many edge-disjoint double rays from $\omega$ to $\omega'$.

The only remaining case is the `one ended' case:
That in which there is a single end $\omega$ of finite vertex-degree and arbitrarily many edge-disjoint double rays from $\omega$ to $\omega$.
One central idea in the proof of this case is to consider $2$-rays instead of double rays.
Here a $2$-ray is a pair of vertex-disjoint rays.
For example, from each double ray one obtains a $2$-ray by removing a finite path.
The remainder of the proof is subdivided into two parts:
In Subsection~\ref{arb_to_inf} we show that if there are arbitrarily many edge-disjoint $2$-rays into $\omega$, then there are infinitely many such $2$-rays.
In Subsection~\ref{inf_to_inf} we show that if there are infinitely many edge-disjoint $2$-rays into $\omega$, then there are infinitely many edge-disjoint double rays from $\omega$ to $\omega$.

We finish by discussing the outlook and mentioning some open problems.

\section{Preliminaries}
All our basic notation for graphs is taken from~\cite{DiestelBook10noEE}.
In particular, two rays in a graph are equivalent if no finite set separates them.
The equivalence classes of this relation are called the \emph{ends} of~$G$.
We say that a ray in an end $\omega$ \emph{converges} to~$\omega$.
A double ray \emph{converges} to all the ends of which it includes a ray.

\subsection{The structure of a thin end}
It follows from Halin's Theorem that if there are arbitrarily many vertex-disjoint rays in an end of~$G$, then there are infinitely many such rays.
This fact motivated the central definition of the \emph{vertex-degree} of an end $\omega$:
the maximal cardinality of a set of vertex-disjoint rays in~$\omega$.

An end is \emph{thin} if its vertex-degree is finite, and otherwise it is \emph{thick}.
A pair $(A,B)$ of edge-disjoint subgraphs of~$G$ is a \emph{separation} of~$G$ if $A\cup B= G$.
The number of vertices of $A\cap B$ is called the \emph{order} of the separation.

\begin{defi}\label{captures}
Let $G$ be a locally finite graph and $\omega$ a thin end of~$G$.
A countable infinite sequence $((A_i,B_i))_{i\in\N}$ of separations of~$G$ \emph{captures $\omega$} if for all $i\in\N$
\begin{itemize}
\item $A_i\cap B_{i+1}=\emptyset$,
\item $A_{i+1}\cap B_i$ is connected,
\item $\bigcup_{i\in\N} A_i= G$,
\item the order of $(A_i,B_i)$ is the vertex-degree of $\omega$, and
\item each $B_i$ contains a ray from $\omega$.
\end{itemize}
\end{defi}

\begin{lem}\label{omega-sequence}
Let $G$ be a locally finite graph with a thin end $\omega$. Then there is a sequence that captures $\omega$.
\end{lem}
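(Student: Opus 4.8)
The plan is to isolate a single separation lemma and then build the sequence greedily from it, so let me first fix data. Write $d$ for the vertex-degree of $\omega$; it is finite since $\omega$ is thin, and by Halin's theorem the maximum is attained, so I fix $d$ vertex-disjoint rays $R_1,\dots,R_d$ in $\omega$ and, after truncating, assume their initial vertices lie in a fixed finite set $X_0$.

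The engine is the following claim: for every finite $X\supseteq X_0$ there is a separation $(A,B)$ of order exactly $d$ with $X\subseteq V(A)$ and $\omega$ in $B$, which can moreover be taken so that $B=\overline{C}$ is the closure of the $\omega$-component $C$ of $G-(A\cap B)$ (hence $B$ is connected and $A\cap B=N(C)$). The lower bound ``order $\ge d$'' is immediate: each $R_j$ runs from $X\subseteq V(A)$ to a tail in $B$, so it meets $A\cap B$, and disjointness of the rays forces $|A\cap B|\ge d$. The upper bound is the real content: if every such separation had order $\ge d+1$, the finite-to-end form of Menger's theorem (ordinary finite Menger on growing balls plus a König's-lemma extraction, legitimate because $G$ is locally finite and $d$ is finite) would yield $d+1$ disjoint $X$--$\omega$ rays, i.e.\ $d+1$ disjoint rays in $\omega$, contradicting $\deg\omega=d$. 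Finally, replacing a size-$d$ separator $T$ by $S:=N(C_\omega(T))$ and applying the lower bound shows $S=T$, so the $\omega$-side may indeed be taken to be the connected set $\overline{C_\omega(T)}$.

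With this lemma in hand I would build the sequence greedily. Enumerate $V(G)=\{v_1,v_2,\dots\}$. Given $(A_i,B_i)$ with $B_i=\overline{C_\omega(S_i)}$, I choose the next finite set $X_{i+1}\subseteq V(B_i)$ to be \emph{connected} and to contain $S_i$, its neighbourhood inside $B_i$, and the first not-yet-absorbed vertex lying in $V(B_i)$; then I apply the lemma to obtain $(A_{i+1},B_{i+1})$ of order $d$ with $B_{i+1}=\overline{C_\omega(S_{i+1})}$ and $C_\omega(S_{i+1})\subsetneq C_\omega(S_i)$. Including the neighbourhood of $S_i$ makes $S_{i+1}$ strictly interior to $C_\omega(S_i)$, which gives $A_i\cap B_{i+1}=\emptyset$; feeding in the enumerated vertices forces the $\omega$-components to shrink to empty intersection, giving $\bigcup_i A_i=G$; the order is $d$ and each $B_i$ contains ray-tails of $\omega$ by construction.

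The one substantial point, and the step I expect to be the main obstacle, is the connectivity of the annulus $A_{i+1}\cap B_i=\overline{C_\omega(S_i)}\setminus C_\omega(S_{i+1})$, which is exactly where the shape of $X_{i+1}$ pays off. First, no component $M_t$ of this annulus can avoid $S_i$: its interior vertices see only $\overline{C_\omega(S_i)}$, and as $M_t$ meets neither $S_i$ nor $C_\omega(S_{i+1})$ it would attach to the rest of $G$ solely through its nonempty interface $M_t\cap S_{i+1}$, so $C_\omega(S_{i+1})\cup M_t$ would be an $\omega$-side with boundary inside $S_{i+1}$ minus that interface, hence of order $<d$ while still separating $X_0$ from $\omega$, contradicting the lower bound. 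Thus every component of the annulus meets $S_i$; but $S_i\subseteq X_{i+1}$ with $X_{i+1}$ connected and contained in the annulus, so all of $S_i$ lies in one component, forcing the annulus to have a single component. I expect this connectivity argument, together with a careful setup of the finite-to-end Menger lemma, to absorb essentially all the work, the remaining five clauses of the definition being bookkeeping driven by the enumeration of $V(G)$ and the insertion of neighbourhoods into the $X_{i+1}$.
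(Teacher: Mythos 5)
Your overall architecture is close in spirit to the paper's: you build nested separations greedily, you force $A_i\cap B_{i+1}=\emptyset$ by swallowing the neighbourhood of $S_i$ into the next finite set, and your ``interface'' argument for the connectivity of the annulus $A_{i+1}\cap B_i$ is correct (the paper obtains the same two facts from its auxiliary connectors $T_i$ and its Menger path systems). The genuine gap is in your engine claim, specifically the upper bound: that beyond any finite $X$ there is an $X$--$\omega$ separation of order at most $d$. You delegate this to a ``finite-to-end form of Menger's theorem'' and sketch its proof as finite Menger on growing balls plus a K\"onig extraction. That sketch does not work: a K\"onig limit of systems of $d+1$ disjoint $X$--$N(C_n)$ paths does yield $d+1$ disjoint rays starting in $X$, but nothing forces these limit rays to lie in $\omega$. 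Menger gives you no control over which path systems appear; if $G$ has another end whose ``territory'' is near $X$ --- say a half-grid attached next to $X$, which is allowed since only $\omega$ is assumed thin --- then the $n$-th system may detour arbitrarily deep into the grid before returning to reach $N(C_n)$, and the limit rays then converge to the grid's end rather than to $\omega$. Rays not in $\omega$ give no contradiction with the vertex-degree of $\omega$ being $d$, so the upper bound remains unproven. This is not a peripheral citation issue: the statement you are invoking is essentially equivalent to the lemma being proved (it asserts exactly that order-$d$ separations exist beyond every finite set, together with saturating ray systems), so the entire difficulty of the lemma has been displaced into a step that is hand-waved with an argument that fails.

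The paper's proof shows what is needed to make ``the rays lie in $\omega$'' come out, and it never takes a limit of paths to a single faraway separator. It first chooses \emph{all} the separations, each of minimal order subject to $B_i$ avoiding the first $i$ enumerated vertices and the previously built finite connectors; it then applies finite Menger only between the consecutive finite separators $X_i$ and $X_{i+1}$ (the minimality of the orders is what guarantees $|X_i|$ disjoint paths there), and concatenates these path systems into rays that meet every $B_i$. Membership in $\omega$ is then automatic: any ray meeting all the $B_i$ cannot be separated from $\omega$ by a finite vertex set, since every finite set lies in $\{v_1,\dots,v_i\}$ for some $i$, while $B_i$ is connected, contains a ray of $\omega$, and avoids those vertices. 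In short, convergence to $\omega$ is purchased from the nested global structure of the whole sequence, not from a one-shot compactness argument; this is precisely the device your proposal lacks, and without it (or an equivalent inductive re-linking argument) the central step of your proof is missing.
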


\begin{proof}
Without loss of generality $G$ is connected, and so is countable.
Let $v_1,v_2,\ldots$ be an enumeration of the vertices of~$G$.
Let $k$ be the vertex-degree of~$\omega$.
Let $\RF=\{R_1,\ldots,R_k\}$ be a set of vertex-disjoint rays in $\omega$ and let $S$ be the set of their start vertices.
We pick a sequence $((A_i,B_i))_{i\in\N}$ of separations and a sequence $(T_i)$ of connected subgraphs recursively as follows.
We pick $(A_i,B_i)$ such that $S$ is included in $A_i$, such that there is a ray from $\omega$ included in $B_i$, and such that $B_i$ does not meet $\bigcup_{j<i}T_j$ or $\{v_j\mid j\le i\}$: subject to this we minimise the size of the set $X_i$ of vertices in $A_i\cap B_i$.
Because of this minimization $B_i$ is connected and $X_i$ is finite.
We take $T_i$ to be a finite connected subgraph of $B_i$ including $X_i$.
Note that any ray that meets all of the $B_i$ must be in~$\omega$.

By Menger's Theorem~\cite{DiestelBook10noEE} we get for each $i\in\N$ a set $\PF_i$ of vertex-disjoint paths from $X_i$ to $X_{i+1}$ of size $|X_i|$.
From these, for each $i$ we get a set of $|X_i|$ vertex-disjoint rays in $\omega$.
Thus the size of $X_i$ is at most $k$.
On the other hand it is at least $k$ as each ray $R_j$ meets each set $X_i$.

Assume for contradiction that there is a vertex $v\in A_i\cap B_{i+1}$. Let $R$ be a ray from $v$ to $\omega$ inside $B_{i+1}$.
Then $R$ must meet $X_i$, contradicting the definition of $B_{i+1}$.
Thus $A_i\cap B_{i+1}$ is empty.

Observe that $\bigcup\PF_i\cup T_i$ is a connected subgraph of $A_{i+1}\cap B_i$ containing all vertices of $X_i$ and $X_{i+1}$.
For any vertex $v\in A_{i+1}\cap B_i$ there is a $v$--$X_{i+1}$ path $P$ in $B_i$.
$P$ meets $B_{i+1}$ only in $X_{i+1}$.
So $P$ is included in $A_{i+1}\cap B_i$.
Thus $A_{i+1}\cap B_i$ is connected.
The remaining conditions are clear.
\end{proof}

\begin{rem}
Every infinite subsequence of a sequence capturing $\omega$ also captures $\omega$.\qed
\end{rem}

The following is obvious:
\begin{rem}\label{lem:finite_path}
Let $G$ be a graph and $v,w\in V(G)$
If $G$ contains arbitrarily many edge-disjoint $v$--$w$ paths, then it contains infinitely many edge-disjoint $v$--$w$ paths.\qed
\end{rem}

We will need the following special case of the theorem of Andreae mentioned in the Introduction.
\begin{theo}[Andreae~\cite{andreae81}]\label{lem:Andreae}
Let $G$ be a graph and $v\in V(G)$.
If there are arbitrarily many edge-disjoint rays all starting at $v$, then there are infinitely many edge-disjoint rays all starting at $v$.
\end{theo}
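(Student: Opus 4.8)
The plan is to read the statement as an infinite Menger-type duality for rays based at $v$: the maximum number of edge-disjoint rays starting at $v$ ought to equal the minimum size of an edge set $F$ for which the component of $v$ in $G-F$ contains no ray. Granting such a duality the theorem is immediate, since if $G$ has $k+1$ edge-disjoint $v$--rays for every $k$ then any edge set meeting all $v$--rays must meet each of $k+1$ disjoint rays in a distinct edge and so cannot be finite; thus the minimum is infinite, and hence so is the maximum. The whole content therefore lies in producing one infinite edge-disjoint family once every finite edge set fails to separate $v$ from infinity. First I would record two reductions. Since distinct edge-disjoint $v$--rays leave $v$ along distinct edges, the hypothesis forces $v$ to have infinite degree; and, replacing $G$ by the union of the rays in the given finite families together with the component of $v$, I may assume that $G$ is connected and countable.

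Next I would analyse the ends to which the rays converge. If the families use rays converging to infinitely many distinct ends, I would proceed greedily: having built edge-disjoint rays into $n$ distinct ends, each such ray meets any fixed finite region in only finitely many edges, so a finite vertex set separates a further occurring end $\omega'$ from the previous $n$; using that $v$ has infinite degree I route a fresh $v$--ray into $\omega'$ whose tail is vertex-disjoint from, and whose initial segment avoids the finitely many forbidden edges of, the rays built so far. Otherwise only finitely many ends occur, and by the pigeonhole principle a single end $\omega$ already carries arbitrarily many edge-disjoint $v$--rays. If $\omega$ is thick, Halin's Theorem~\cite{halin64} supplies infinitely many vertex-disjoint rays in $\omega$, which are in particular edge-disjoint, and a fan argument linking $v$ to infinitely many of them by otherwise disjoint paths yields the family.

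The heart of the matter is a \emph{thin} end $\omega$ carrying arbitrarily many edge-disjoint $v$--rays. Here I would apply Lemma~\ref{omega-sequence} to obtain separations $(A_i,B_i)$ of bounded order capturing $\omega$, with separators $X_i=A_i\cap B_i$, and treat the finite regions $C_i=A_{i+1}\cap B_i$ between consecutive separators as the successive stages of a network through which every $v$--ray must pass in turn. Within each stage the many edge-disjoint rays induce arbitrarily many edge-disjoint $X_i$--$X_{i+1}$ linkages, so a stagewise Menger/augmenting-path analysis, threaded together across the stages, should extract a single infinite edge-disjoint family.

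The main obstacle is exactly this thin case, and it is genuinely infinitary. Although the separators $X_i$ have bounded size, the hypothesis forces some of their vertices to have infinite degree, so $G$ cannot be made locally finite without destroying edge-disjointness — splitting a high-degree vertex into a path merges routings that were edge-disjoint through it — and the reduction available for the unconstrained edge-disjoint Halin statement is thus lost. A naive diagonal limit of the finite families also fails, because a single ray already occupies infinitely many edges, so deleting finitely many previously built rays removes infinitely many edges and the greedy step loses control. The real work is therefore to run the augmenting construction across the infinitely many bounded-order stages at once, ensuring that the finite improvements made at successive separators cohere into honest rays rather than merely unboundedly long finite paths; this coherence across stages is where I expect the difficulty to concentrate.
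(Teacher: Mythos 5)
This statement is not proved in the paper at all: it is quoted as a known theorem of Andreae~\cite{andreae81} and used as a black box (e.g.\ in the proof of Lemma~\ref{lem:two_ends}). So your argument has to stand on its own, and it does not. The decisive gap is the case you yourself call the heart of the matter: a \emph{thin} end carrying arbitrarily many edge-disjoint $v$--rays. There you only set up a stage decomposition and assert that a stagewise Menger/augmenting-path analysis ``should extract'' an infinite edge-disjoint family, and your closing paragraph concedes that the real work --- making the finite improvements at successive separators cohere into genuine rays --- remains to be done. A proof cannot stop where the acknowledged difficulty begins; this coherence problem is essentially the whole content of Andreae's theorem (and is the same kind of difficulty the paper spends Subsection~\ref{arb_to_inf} overcoming for $2$-rays, via shapes and allowed shapes rather than augmenting paths). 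Moreover, even your setup does not go through as stated: Lemma~\ref{omega-sequence} is proved only for locally finite graphs, and, as you yourself observe, your hypothesis forces $v$ (and possibly separator vertices) to have infinite degree, so you cannot invoke that lemma to obtain the capturing sequence $((A_i,B_i))_{i\in\N}$ in the first place.

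The other branches also contain unproven steps. In the infinitely-many-ends case, the greedy step needs a fresh $v$--ray, edge-disjoint from the $n$ rays already built, into a new end $\omega'$; your justification controls the forbidden edges only inside the component $C$ of $G-S$ containing $\omega'$, but says nothing about how to travel from $v$ to $C$ while avoiding the infinitely many used edges outside $C$ --- the infinite degree of $v$ alone does not provide such a route. In the thick case, the ``fan argument'' linking $v$ by edge-disjoint paths to infinitely many of Halin's vertex-disjoint rays is exactly the kind of linkage statement that requires an argument (compare the effort the paper invests in Lemma~\ref{choose_your_start} and in the proof of Lemma~\ref{lem:two_ends} for the analogous task). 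Finally, the Menger-type duality you open with is harmless but buys nothing: the nontrivial direction of that duality \emph{is} the theorem, as you in effect admit.
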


\section{Known cases}\label{known_cases}
Many special cases of Theorem~\ref{main} are already known or easy to prove.
For example Halin showed the following.
\begin{theo}[Halin]\label{lem:Halin}
Let $G$ be a graph and $\omega$ an end of $G$.
If $\omega$ contains arbitrarily many vertex-disjoint rays, then $G$ has a half-grid as a minor.
\end{theo}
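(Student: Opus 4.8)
The plan is to reduce to the case of infinitely many disjoint rays and then assemble these rays into the columns of a half-grid. Since $\omega$ contains arbitrarily many vertex-disjoint rays, the consequence of Halin's Theorem recorded above (arbitrarily many disjoint rays in an end give infinitely many) yields a family $\{R_i : i\in\Z\}$ of pairwise vertex-disjoint rays all lying in $\omega$. These will become the vertical lines (columns) of the half-grid $\Z\times\N$, whose rows are the double rays; what then remains is to connect consecutive columns by ``rungs'' at a common increasing sequence of heights.

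The main tool is that membership in a single end $\omega$ forbids finite separation: for any two of the rays $R_i,R_{i+1}$ and any finite vertex set $X$, tails of $R_i$ and $R_{i+1}$ are joined by a path avoiding $X$. Iterating this (formally via Menger's Theorem applied to growing finite separators, as in the proof of Lemma~\ref{omega-sequence}) produces, between each consecutive pair $R_i,R_{i+1}$, an infinite family of pairwise-disjoint connecting paths. Contracting the segment of each $R_i$ between two consecutive attachment points turns a pair of adjacent columns together with their connecting paths into an infinite ladder, which is exactly the half-grid restricted to two columns.

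The heart of the argument is to carry out all these constructions simultaneously while (a) keeping everything pairwise disjoint and (b) aligning the rungs so that across all columns they occur at a common sequence of levels — only then is the union genuinely a half-grid rather than an uncontrolled tree of ladders. I would do this by a recursive, diagonal construction: at stage $n$ I commit a finite initial piece of the minor model (a finite $\{-n,\ldots,n\}\times\{0,\ldots,n\}$ sub-grid of branch sets), and at the next stage extend it, each time choosing the new rung-paths and column-segments to avoid the finite portion already used. Here the end-connectivity is invoked in the strong form ``connecting paths avoiding any prescribed finite set exist,'' which lets the new pieces dodge all earlier commitments; the branch sets are then the contracted column-segments together with the rung-paths.

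The step I expect to be the main obstacle is precisely this alignment-with-disjointness bookkeeping. A connecting path chosen to join $R_i$ to $R_{i+1}$ may a priori run through earlier columns or previously used rungs, and truncating it at its first such meeting can land it on the wrong column or at the wrong height. Managing this — for instance by always routing the newest column inward toward the already-built structure and re-indexing rung heights as new attachment points appear — is the delicate part; everything else (existence of the rays, existence of infinitely many disjoint connecting paths, and the final verification that the contracted union is a half-grid) is routine given the results already available.
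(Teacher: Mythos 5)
The paper does not actually prove this statement: it is quoted as a known theorem of Halin and used as a black box (only its corollary, that an end of infinite vertex-degree yields infinitely many edge-disjoint double rays, is invoked later). So your proposal can only be judged on its own terms, and on those terms it has a genuine gap. Your first two steps are fine: Halin's ubiquity theorem for rays upgrades ``arbitrarily many'' to an infinite family $\{R_i\}$ of pairwise disjoint rays in $\omega$, and end-equivalence plus Menger gives, for any two of them and any prescribed finite set, disjoint connecting paths avoiding that set. But the entire content of the grid theorem lies in the step you explicitly defer: arranging that a path intended to join $R_i$ to $R_{i+1}$ does not stray through the tails of the other rays or through previously chosen paths. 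The remedies you sketch (truncate at the first unwanted meeting, route the newest column inward, re-index heights) are exactly the naive attempts that fail: truncation attaches the path to the wrong ray, and since every ray keeps an infinite uncommitted tail, no ``avoid a prescribed finite set'' argument can protect those tails from the new paths. The standard proof resolves this by never committing to the infinite tails at all: at each stage one fixes only a finite partial grid together with an infinite reservoir of disjoint rays avoiding it, and when a new column is added both the new ray and the tails of the old rays are rerouted through the system of connecting paths, with a pigeonhole/Menger count showing that enough disjoint paths survive the rerouting. Without that idea the construction does not close up, so as written the proof is incomplete.

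One small point in your favour: since only a minor is required, the rungs need not occur at a ``common sequence of levels'' across all columns --- each branch set is simply a segment of one ray, indexed along that ray --- so that part of the bookkeeping you worry about can be dispensed with entirely. The real issue is solely the disjointness of the connecting paths from the other rays and from each other.
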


\begin{cor}\label{lem:inf_vx_degree}
Any graph with an end of infinite vertex-degree has infinitely many edge-disjoint double rays.\qed
\end{cor}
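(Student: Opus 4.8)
The plan is to reduce the statement to Halin's grid theorem (Theorem~\ref{lem:Halin}), using that a half-grid patently contains infinitely many pairwise disjoint double rays.

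First I would note that an end $\omega$ of infinite vertex-degree contains infinitely many vertex-disjoint rays, and in particular arbitrarily many; so Theorem~\ref{lem:Halin} applies and produces a half-grid $H$ as a minor of~$G$. In the half-grid one of the two families of grid-lines consists of pairwise disjoint double rays, so $H$ itself contains infinitely many pairwise disjoint double rays $D_0, D_1, \ldots$. It then remains to transport these back through the minor to double rays in~$G$, which I expect to be the only step requiring any care.

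For the lifting step, recall that the half-grid minor is witnessed by a family $(V_x)_{x \in V(H)}$ of pairwise disjoint connected branch sets in~$G$ together with a connecting edge $e_{xy}$ of~$G$ between $V_x$ and $V_y$ for each edge $xy$ of~$H$. Writing $D_i = (\ldots, x_{-1}, x_0, x_1, \ldots)$, for each $j$ both $e_{x_{j-1}x_j}$ and $e_{x_jx_{j+1}}$ have an endpoint in the connected set $V_{x_j}$, so I may pick a finite path $P_j$ inside $V_{x_j}$ joining these two endpoints. Concatenating $\ldots, P_{-1}, e_{x_{-1}x_0}, P_0, e_{x_0x_1}, P_1, \ldots$ then yields a subgraph $\widehat D_i$ of~$G$. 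The point to verify is that this is a genuine double ray and not merely a doubly infinite walk: the pieces $P_j$ lie in pairwise disjoint branch sets and are joined only by the connecting edges, so no vertex recurs, and the walk is infinite in both directions; hence $\widehat D_i$ is a double ray.

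Finally I would check that $\widehat D_0, \widehat D_1, \ldots$ are edge-disjoint; in fact they come out vertex-disjoint. Distinct $D_i$ run through disjoint sets of vertices of~$H$, hence through disjoint families of branch sets, so their path-pieces never share a vertex; and since the $D_i$ are edge-disjoint in~$H$, no connecting edge $e_{xy}$ is used by more than one $\widehat D_i$. This gives infinitely many edge-disjoint double rays in~$G$, as required. The main (albeit routine) obstacle is the lifting step, i.e.\ ensuring that the concatenation really is a repetition-free, doubly infinite path.
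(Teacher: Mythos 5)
Your proposal is correct and follows exactly the route the paper intends: the corollary is stated as an immediate consequence of Theorem~\ref{lem:Halin}, whose proof the authors omit entirely (the \qed marks it as routine), and your argument simply fills in the standard details of extracting disjoint double rays from the half-grid and lifting them through the branch sets of the minor. Nothing further is needed.
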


Another simple case is the case where the graph has infinitely many ends.

\begin{lem}\label{infinitely_many_ends}
A tree with infinitely many ends contains infinitely many edge-disjoint double rays.
\end{lem}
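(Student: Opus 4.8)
The plan is to peel off a single double ray at a time while keeping an edge-disjoint subtree that still has infinitely many ends. First I would record how a double ray sits inside a tree. Recall that in a tree $S$ a double ray $D$ is just the unique bi-infinite path joining two distinct ends. Deleting the edges of $D$ leaves a forest whose components $C_x$ are indexed by the vertices $x\in V(D)$, with $x\in C_x$; and these components partition $V(S)$, since the unique $x$--$x'$ path between two vertices of $D$ runs along $D$. The key bookkeeping is that $D$ \emph{splits the ends}: every end of $S$ is either one of the two ends of $D$ or an end of exactly one $C_x$. Indeed, a ray $\rho$ not equivalent to an end of $D$ can meet $D$ only finitely often, because in a tree the intersection of two rays is a path and an infinite such intersection would force a tail of $\rho$ into $D$; hence a tail of $\rho$ lies in a single $C_x$.

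Since $S$ has infinitely many ends but $D$ contributes only two, infinitely many ends are spread over the $C_x$, and there are two cases. In the \emph{spread-out} case, infinitely many $C_x$ contain a ray. Choosing such a ray $P_x$ (disjoint from $D$ except at $x$) for infinitely many $x$, and passing to a subfamily $x_1,x_2,\dots$ whose indices are monotone along $D$, I would form the double rays $P_{x_{2k-1}}\cup D[x_{2k-1},x_{2k}]\cup P_{x_{2k}}$. Each is a genuine double ray, and they are pairwise edge-disjoint because they use pairwise disjoint segments of $D$ together with pairwise disjoint branches; this already gives infinitely many edge-disjoint double rays.

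In the remaining \emph{concentrated} case a single $C_x$ has infinitely many ends, and here I would recurse. Set $S_1=S$, $D_1=D$, and take $S_2=C_x$, a tree with infinitely many ends that is edge-disjoint from $D_1$; then repeat the construction inside $S_2$, and so on. At each stage I either land in the spread-out case and finish at once, or I extract one more double ray $D_k\subseteq S_k$ and descend to an edge-disjoint infinite-ended subtree $S_{k+1}\subseteq S_k\smallsetminus E(D_k)$. Non-termination is then harmless: since $S_{k+1}\subseteq\dots\subseteq S_{j+1}$ is edge-disjoint from $D_j$ for every $j<k$, the double rays $D_1,D_2,\dots$ produced along an infinite descent are automatically pairwise edge-disjoint, so the descent itself hands us infinitely many edge-disjoint double rays.

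The main obstacle is exactly this concentrated case, where naive greediness threatens an infinite regress; the resolution is the observation that the regress is \emph{productive}, each level contributing one fresh edge-disjoint double ray. The only genuinely technical point is the end-splitting claim, and since its verification rests solely on the fact that two rays in a tree meet in a path, I would isolate it as a short preliminary step before running the case analysis.
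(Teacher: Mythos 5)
Your proof is correct, and it reaches the result by a genuinely different inductive step than the paper's. Both arguments peel off one double ray at a time and recurse into an edge-disjoint subtree, but the paper makes the recursion unconditional by \emph{choosing} the double ray carefully: it finds a vertex $v$ such that $T-v$ has three components each containing an end, routes the double ray through two of them via two edges at $v$, and observes that some component of the remaining forest still has infinitely many ends --- so every step produces exactly one new double ray and one new infinite-ended subtree, with no case distinction. You instead start from an \emph{arbitrary} double ray $D$, prove the end-splitting lemma for the branches $C_x$ hanging off $D$, and then run a pigeonhole dichotomy: either infinitely many branches carry rays (and you finish in one stroke by pairing branches along disjoint segments of $D$), or a single branch carries infinitely many ends (and you recurse, the possibly infinite descent being itself the source of the double rays). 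What the paper's route buys is brevity: the ``good'' choice of double ray makes the reduction a one-liner and needs no analysis of how ends distribute after deletion. What your route buys is that the starting double ray is arbitrary and that the spread-out case terminates immediately; the price is the extra bookkeeping of the end-splitting claim (which you correctly reduce to the fact that two rays in a tree intersect in a path) and the verification that the dichotomy is exhaustive, which your pigeonhole argument does supply. Both are complete proofs of comparable difficulty.
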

\begin{proof}
It suffices to show that every tree $T$ with infinitely many ends contains a double ray such that removing its edges leaves a component containing infinitely many ends, since then one can pick those double rays recursively.

There is a vertex $v\in V(T)$ such that $T-v$ has at least $3$ components $C_1,C_2,C_3$ that each have at least one end, as $T$ contains more than $2$ ends.
Let $e_i$ be the edge $vw_i$ with $w_i\in C_i$ for $i\in\{1,2,3\}$.
The graph $T\sm\{e_1,e_2,e_3\}$ has precisely $4$ components ($C_1,C_2,C_3$ and the one containing $v$), one of which, $D$ say, has infinitely many ends.
By symmetry we may assume that $D$ is neither $C_1$ nor $C_2$.
There is a double ray $R$ all whose edges are contained in $C_1\cup C_2 \cup \{e_1, e_2\}$.
Removing the edges of~$R$ leaves the component $D$, which has infinitely many ends.
\end{proof}

\begin{cor}\label{lem:inf_ends}
Any connected graph with infinitely many ends has infinitely many edge-disjoint double rays.\qed
\end{cor}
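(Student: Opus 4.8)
The plan is to deduce the corollary from Lemma~\ref{infinitely_many_ends} by producing, inside $G$, a tree with infinitely many ends. Since any such tree $T$ satisfies $E(T)\subseteq E(G)$, a family of edge-disjoint double rays in $T$ is automatically a family of edge-disjoint double rays in $G$, so it suffices to find one tree subgraph of $G$ with infinitely many ends and invoke Lemma~\ref{infinitely_many_ends}. The work is therefore entirely in locating such a tree.

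First I would pass to a countable connected subgraph that still has infinitely many ends. Choose distinct ends $\omega_1,\omega_2,\ldots$ of $G$ and a ray $R_i$ in each $\omega_i$. Using that $G$ is connected, for each $i$ pick a finite path in $G$ joining $R_1$ to $R_i$, and let $H$ be the union of all the $R_i$ together with these paths; then $H$ is countable and connected. The rays $R_i$ remain in pairwise distinct ends of $H$: if a finite set $F\subseteq V(G)$ separates the tails of $R_i$ and $R_j$ in $G$, then $F\cap V(H)$ separates them in $H$, because $H-F$ is a subgraph of $G-F$ and so a path between the tails in $H-F$ would be one in $G-F$. Hence $H$ has infinitely many ends. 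Next I would replace $H$ by a tree remembering these ends. As $H$ is countable and connected, it has a normal spanning tree $T$~\cite{DiestelBook10noEE}, and the natural map from the ends of $T$ to the ends of $H$ is a bijection. Thus $T$ is a tree with infinitely many ends, and Lemma~\ref{infinitely_many_ends} applied to $T$ yields infinitely many edge-disjoint double rays in $T$, hence in $G$.

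I expect the main obstacle to be precisely the passage from a graph to a tree without collapsing ends. One cannot simply take an \emph{arbitrary} spanning tree of $G$, since the induced map on ends need only be surjective and can merge several ends of the graph into a single end of the tree; and end-faithful (in particular normal) spanning trees are not guaranteed to exist once $G$ is uncountable. This is exactly why the reduction to the countable connected subgraph $H$ is carried out first: it is this countability that makes a normal, end-faithful spanning tree available, after which the infinitude of ends is transported to a genuine tree and Lemma~\ref{infinitely_many_ends} finishes the argument.
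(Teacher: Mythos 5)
Your proof is correct and follows the route the paper intends: the corollary is stated as an immediate consequence of Lemma~\ref{infinitely_many_ends}, and your argument supplies exactly the missing reduction --- passing to a countable connected subgraph containing rays from infinitely many distinct ends and then taking a normal (hence end-faithful) spanning tree. The care you take about an arbitrary spanning tree possibly merging ends, and about end-faithful spanning trees not existing in general uncountable graphs, is warranted and correctly handled by the preliminary reduction to a countable subgraph.
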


\section{The `two ended' case}\label{two_ended}
Using the results of Section~\ref{known_cases} it is enough to show that any graph with only finitely many ends, each of which is thin, has infinitely many edge-disjoint double rays as soon as it has arbitrarily many edge-disjoint double rays.
Any double ray in such a graph has to join a pair of ends (not necessarily distinct), and there are only finitely many such pairs.
So if there are arbitrarily many edge-disjoint double rays, then there is a pair of ends such that there are arbitrarily many edge-disjoint double rays joining those two ends.
In this section we deal with the case where these two ends are different, and in Section~\ref{one_end} we deal with the case that they are the same.
We start with two preparatory lemmas.

\begin{lem}\label{simple_ramsey}
Let $G$ be a graph with a thin end $\omega$, and let $\RF\sub\omega$ be an infinite set.
Then there is an infinite subset of $\RF$ such that any two of its members intersect in infinitely many vertices.
\end{lem}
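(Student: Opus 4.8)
The plan is to apply the infinite version of Ramsey's theorem to the pairs of rays in $\RF$, colouring a pair $\{R,R'\}$ according to whether $R$ and $R'$ share infinitely many or only finitely many vertices. Ramsey then delivers an infinite subset $\RF'\sub\RF$ that is homogeneous for this colouring, and the whole task reduces to ruling out the colour corresponding to finite intersection on an infinite set.

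To rule it out, I would suppose for contradiction that $\RF'$ consists of rays that pairwise meet in only finitely many vertices. Let $k$ be the vertex-degree of $\omega$, which is finite since $\omega$ is thin, and pick any $k+1$ rays $R_0,\dots,R_k$ from the infinite set $\RF'$. The union $F$ of their pairwise intersections is a finite union of finite sets, hence finite. Since each ray meets each vertex at most once, every $R_i$ eventually leaves $F$ for good; discarding the initial segment up to the last vertex of $F$ on $R_i$ yields a tail $R_i'$ with $V(R_i')\cap F=\emptyset$. Because $V(R_i)\cap V(R_j)\sub F$ for $i\ne j$, these tails are pairwise vertex-disjoint. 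Each $R_i'$ is a subray of $R_i$ and so again lies in $\omega$. Thus $R_0',\dots,R_k'$ are $k+1$ vertex-disjoint rays in $\omega$, contradicting the fact that the vertex-degree of $\omega$ equals $k$.

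Hence the homogeneous set produced by Ramsey must carry the other colour: any two of its members share infinitely many vertices, which is exactly the infinite subset of $\RF$ that is sought.

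The only real content is the observation that finitely many rays with pairwise finite overlap can be truncated to genuinely disjoint rays; everything else is a routine application of Ramsey. I do not expect a serious obstacle here: the thinness hypothesis is used in precisely the one place where it bounds the number of vertex-disjoint rays, and the Ramsey step is entirely standard.
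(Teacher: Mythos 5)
Your proposal is correct and follows essentially the same route as the paper: a two-colouring of pairs by finite versus infinite intersection, an application of the infinite Ramsey theorem, and the elimination of the finite-intersection colour by truncating $k+1$ such rays past their (finite) union of pairwise intersections to obtain $k+1$ vertex-disjoint rays in $\omega$, contradicting thinness. The paper merely phrases the Ramsey step via an auxiliary graph with clique/independent set instead of a colouring, which is the same argument.
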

\begin{proof}
We define an auxilliary graph $H$ with $V(H)=\RF$ and an edge between two rays if and only if they intersect in infinitely many vertices.
By Ramsey's Theorem either $H$ contains an infinite clique or an infinite independent set of vertices.
Let us show that there cannot be an infinite independent set in~$H$.
Let $k$ be the vertex-degree of $\omega$: we shall show that $H$ does not have an independent set of size $k+1$.
Suppose for a contradiction that $X\sub \RF$ is a set of $k+1$ rays that is independent in~$H$.
Since any two rays in~$X$ meet in only finitely many vertices, each ray in~$X$ contains a tail that is disjoint to all the other rays in~$X$.
The set of these $k+1$ vertex-disjoint tails witnesses that $\omega$ has vertex-degree at least $k+1$, a contradiction.
Thus there is an infinite clique $K\sub H$, which is the desired infinite subset.
\end{proof}
\begin{lem}\label{choose_your_start}
Let $G$ be a graph consisting of the union of a set $\RF$ of infinitely many edge-disjoint rays of which any pair intersect in infinitely many vertices.
Let $X\sub V(G)$ be an infinite set of vertices, then there are infinitely many edge-disjoint rays in~$G$ all starting in different vertices of~$X$.
\end{lem}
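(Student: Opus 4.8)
The plan is to produce the rays one at a time: for infinitely many distinct vertices $x\in X$ I will build a ray of $G$ starting at $x$, and I will secure edge-disjointness by arranging that distinct such rays eventually run along distinct members of $\RF$, which are edge-disjoint by hypothesis. How hard this is depends on how the vertices of $X$ are spread over the rays, so I would split into two cases according to whether some single ray $R\in\RF$ contains infinitely many vertices of $X$.

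Suppose first we are in the \emph{dispersed} case, where every ray of $\RF$ meets $X$ in only finitely many vertices. Then I would choose a system of distinct representatives greedily. Having already found distinct $x_1,\dots,x_n\in X$ lying on distinct rays $S_1,\dots,S_n\in\RF$, the set $V(S_1)\cup\dots\cup V(S_n)$ contains only finitely many vertices of $X$; since $X$ is infinite I may pick $x_{n+1}\in X$ outside it, and because $G=\bigcup\RF$ this $x_{n+1}$ lies on some ray $S_{n+1}\in\RF$, which is necessarily different from $S_1,\dots,S_n$. Taking for each $n$ the tail of $S_n$ that starts at $x_n$ then gives infinitely many rays starting at distinct vertices of $X$, and they are edge-disjoint because the $S_n$ are.

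The \emph{concentrated} case, where some ray $R\in\RF$ contains infinitely many vertices of $X$, is where the real work lies, since now the candidate starting vertices all sit on one ray and tails alone will overlap. Here I would splice. Writing $R=r_0r_1r_2\cdots$, the vertices of $X$ on $R$ occur at positions $m_1<m_2<\cdots$. I would build rays $Q_1,Q_2,\dots$ recursively so that $Q_n$ starts at some $r_{m}\in X$, follows $R$ along a finite stretch, and then escapes along a tail of a fresh auxiliary ray $R'\in\RF\sm\{R\}$, each auxiliary ray being used at most once. Letting the successive $Q_n$ occupy pairwise disjoint consecutive segments of $R$ (possible as $R$ carries infinitely many vertices of $X$) keeps them edge-disjoint along $R$, and the escaping tails are automatically edge-disjoint because the auxiliary rays are distinct and edge-disjoint from $R$ and from one another.

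The main obstacle is to make each spliced walk $Q_n$ a genuine ray, that is, to avoid repeated vertices: the auxiliary ray $R'$ meets $R$ infinitely often and may weave back into the initial $R$-segment just traversed by $Q_n$. I would overcome this by choosing the escape point carefully. Listing the positions on $R$ at which $R'$ meets $R$, in the order in which $R'$ visits them, one obtains infinitely many meetings after which $R'$ only ever meets $R$ at strictly larger positions --- namely the successive minima of the tails of this sequence of positions --- and their positions tend to infinity. Escaping at such a meeting point $r_a$ with $a$ large enough guarantees that the chosen tail of $R'$ avoids the finite segment $r_{m}\cdots r_{a-1}$ of $R$ entirely, so that $Q_n$ is simple and hence a ray. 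This would complete the construction.
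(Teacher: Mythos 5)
Your proof is correct and follows essentially the same route as the paper's: the same dichotomy between the case where infinitely many rays of $\RF$ carry distinct vertices of $X$ (take tails of those rays) and the case where a single ray $R$ meets $X$ infinitely often (splice initial segments of $R$ onto tails of fresh rays from $\RF\setminus\{R\}$, using disjoint segments of $R$ and distinct auxiliary rays to keep everything edge-disjoint). Your extra care in choosing the escape vertex so that each spliced walk is a genuine simple ray addresses a point the paper leaves implicit, and is sound.
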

\begin{proof}
If there are infinitely many rays in~$\RF$ each of which contains a different vertex from $X$, then suitable tails of these rays give the desired rays.
Otherwise there is a ray $R\in \RF$ meeting $X$ infinitely often.
In this case, we choose the desired rays recursively such that each contains a tail from some ray in $\RF-R$.
Having chosen finitely many such rays, we can always pick another:
we start at some point in~$X$ on~$R$ which is beyond all the (finitely many) edges on~$R$ used so far.
We follow $R$ until we reach a vertex of some ray $R'$ in $\RF- R$ whose tail has not been used yet, then we follow $R'$.
\end{proof}
\begin{lem}\label{two_ends}\label{lem:two_ends}
Let $G$ be a graph with only finitely many ends, all of which are thin.
Let $\omega_1,\omega_2$ be distinct ends of~$G$.
If $G$ contains arbitrarily many edge-disjoint double rays each of which converges to both $\omega_1$ and $\omega_2$, then $G$ contains infinitely many edge-disjoint double rays each of which converges to both $\omega_1$ and $\omega_2$.
\end{lem}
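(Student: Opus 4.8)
The plan is to reduce the problem, via Andreae's Theorem~\ref{lem:Andreae}, to the task of gluing together two infinite families of rays, one converging to $\omega_1$ and one to $\omega_2$. Since $G$ has only finitely many ends, all thin, I would first fix a finite vertex set $S$ that separates the ends pairwise, and write $C_1,C_2$ for the components of $G-S$ containing $\omega_1,\omega_2$. Because $S$ separates $\omega_1$ from every other end, the component $C_1$ has $\omega_1$ as its \emph{only} end, so that every ray in $C_1$ converges to $\omega_1$; symmetrically for $C_2$ and $\omega_2$. Now I would cut up the given double rays. A double ray $D$ converging to $\omega_1$ and $\omega_2$ meets the finite set $S$ in only finitely many vertices (it is a simple path), so it has an extreme $S$-vertex $a(D)$ beyond which its $\omega_1$-tail stays in $C_1$, and an extreme $S$-vertex $b(D)$ beyond which its $\omega_2$-tail stays in $C_2$. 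Thus $D$ decomposes into an $\omega_1$-ray from $a(D)$ inside $C_1\cup\{a(D)\}$, a finite $a(D)$--$b(D)$ path, and an $\omega_2$-ray from $b(D)$ inside $C_2\cup\{b(D)\}$, all mutually edge-disjoint.

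Since $S$ is finite there are only finitely many possible pairs $(a(D),b(D))$, so by the pigeonhole principle some fixed pair $(a,b)\in S\times S$ is realised by arbitrarily large subfamilies of edge-disjoint double rays. For this fixed pair the $\omega_1$-tails form arbitrarily many edge-disjoint rays from $a$ inside $G[C_1\cup\{a\}]$, whose only end is $\omega_1$; applying Andreae's Theorem~\ref{lem:Andreae} there yields an infinite family $\PF$ of pairwise edge-disjoint rays from $a$ converging to $\omega_1$. Symmetrically, Andreae's theorem applied in $G[C_2\cup\{b\}]$ gives an infinite edge-disjoint family $\QF$ of $\omega_2$-rays from $b$. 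Finally the finite middle pieces constitute arbitrarily many edge-disjoint $a$--$b$ paths, so Remark~\ref{lem:finite_path} upgrades these to an infinite edge-disjoint family $\mathcal M$ of $a$--$b$ paths.

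The main obstacle is the recombination: although $\PF$, $\QF$ and $\mathcal M$ are each individually edge-disjoint, the rays within $\PF$ all emanate from $a$ and in general pairwise meet in infinitely many vertices (and likewise those in $\QF$), while the connecting paths of $\mathcal M$ run through $S$ and may dip back into $C_1$ or $C_2$ and so meet the rays. I must nevertheless produce infinitely many double rays that are genuine paths (no vertex repeated along any one) and are \emph{pairwise} edge-disjoint. This is exactly what Lemmas~\ref{simple_ramsey} and~\ref{choose_your_start} are designed to handle. First I would apply Lemma~\ref{simple_ramsey} to $\PF$ and to $\QF$, so that within each bundle any two rays intersect in infinitely many vertices; this hypothesis is precisely what makes Lemma~\ref{choose_your_start} applicable, which then lets me re-anchor the rays of $\PF$ (and of $\QF$) so that they start at a prescribed infinite set of \emph{distinct} vertices inside $C_1$ (resp.\ $C_2$) rather than all at $a$ (resp.\ $b$).

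With the bundles spread out in this way, I would build the double rays one at a time by a greedy dodging argument, maintaining the finite set $U$ of edges used so far. At each stage I pick a connector, then an $\omega_1$-ray and an $\omega_2$-ray avoiding the finitely many edges of $U$ and the current connector; because each connector is finite and each bundle is an edge-disjoint family, only finitely many candidates are ever blocked, so a fresh choice always remains, and shortcutting an $\omega_1$-ray along the first time it meets its connector (and similarly on the $\omega_2$-side) guarantees that the resulting object is an honest double ray from $\omega_1$ to $\omega_2$. Repeating this yields the desired infinitely many edge-disjoint double rays. I expect the delicate point to be verifying that the re-anchoring of Lemma~\ref{choose_your_start} can be carried out compatibly with the distinct endpoints of the connectors, so that the greedy step genuinely never runs out of room; the infinite-intersection property secured by Lemma~\ref{simple_ramsey} is the key that keeps this under control.
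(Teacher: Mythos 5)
Your reduction---fixing $S$ and the components $C_1,C_2$, pigeonholing the double rays onto a common pair $(a,b)$ of last exit vertices, and applying Theorem~\ref{lem:Andreae} in $C_1+a$ and $C_2+b$ together with Lemma~\ref{simple_ramsey}---is exactly how the paper begins. The gap lies in the recombination step, which is the actual heart of the lemma. Your bookkeeping claim is false: the set of edges used so far is \emph{not} finite, since each double ray you build contains an infinite tail of a ray from $\PF$ (and one from $\QF$). Edge-disjointness of $\mathcal{M}$ only guarantees that connectors do not block each other; it says nothing about connectors meeting ray tails. A connector is an $a$--$b$ path that may wander through $C_1$, and it is perfectly possible that \emph{every} path in $\mathcal{M}$ contains an edge of the tail of the one ray used in your first double ray (pairwise edge-disjoint paths can each use a different edge of that infinite tail). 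In that case no admissible connector remains at the second step and the greedy construction dies; so ``only finitely many candidates are ever blocked'' fails precisely on the connector side, where it is needed.

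Relatedly, your appeal to Lemma~\ref{choose_your_start} is not anchored to anything: that lemma requires a prescribed infinite set $X$ of start vertices, and for the assembled double rays to be edge-disjoint this set must be coordinated with the endpoints of the connectors. In your setup all connectors share the same two endpoints $a$ and $b$, so the ``distinct endpoints of the connectors'' you invoke at the end do not exist. This is exactly what the paper's subdivision trick manufactures: it subdivides the edges of the two ray bundles, shows (this needs a separate argument) that no finite set of edges can separate the two sets $X_1,X_2$ of subdivision vertices---since such a cut would separate $a$ from $b$, contradicting the existence of arbitrarily many edge-disjoint double rays through both---and then takes as connectors infinitely many edge-disjoint $X_1$--$X_2$ paths. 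Such a connector meets each bundle in at most one edge, pinned at a degree-$2$ vertex lying on a unique ray, and Lemma~\ref{choose_your_start} is applied with $X$ equal to the set of these path endpoints. Edge-disjointness of the resulting double rays then holds by construction, with no dodging required. To complete your proof you would have to reinvent this device (or some other mechanism keeping connectors off the infinite ray tails); as written, the argument has a genuine hole.
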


\begin{proof}
For each pair of ends, there is a finite set separating them. The finite union of these finite sets is a finite set $S\sub V(G)$ separating any two ends of~$G$.
For $i=1,2$ let $C_i$ be the component of $G-S$ containing $\omega_i$.

There are arbitrarily many edge-disjoint double rays from $\omega_1$ to $\omega_2$ that have a common last vertex $v_1$ in $S$ before staying in $C_1$ and also a common last vertex $v_2$ in $S$ before staying in $C_2$. Note that $v_1$ may be equal to $v_2$.
There are arbitrarily many edge-disjoint rays in $C_1+v_1$ all starting in~$v_1$.
By Theorem~\ref{lem:Andreae} there is a countable infinite set $\RF_1=\{R_1^i\mid {i\in\N}\}$ of edge-disjoint rays each included in $C_1+v_1$ and starting in $v_1$.
By replacing $\RF_1$ with an infinite subset of itself, if necessary, we may assume by Lemma~\ref{simple_ramsey} that any two members of $\RF_1$ intersect in infinitely many vertices.
Similarly, there is a countable infinite set $\RF_2=\{R_2^i\mid {i\in\N}\}$ of edge-disjoint rays each included in $C_2+v_2$ and starting in $v_2$ such that any two members of $\RF_2$ intersect in infinitely many vertices.

Let us subdivide all edges in $\bigcup\RF_1$ and call the set of subdivision vertices $X_1$.
Similarly, we subdivide all edges in $\bigcup\RF_2$ and call the set of subdivision vertices $X_2$.
Below we shall find double rays in the subdivided graph, which immediately give rise to the desired double rays in~$G$.

Suppose for a contradiction that there is a finite set $F$ of edges separating $X_1$ from $X_2$.
Then $v_i$ has to be on the same side of that separation as $X_i$ as there are infinitely many $v_i$--$X_i$ edges.
So $F$ separates $v_1$ from $v_2$, which contradicts the fact that there are arbitrarily many edge-disjoint double rays containing both $v_1$ and $v_2$.
By Remark~\ref{lem:finite_path} there is a set $\PF$ of infinitely many edge-disjoint $X_1$--$X_2$ paths.
As all vertices in $X_1$ and $X_2$ have degree $2$, and by taking an infinite subset if necessary, we may assume that each end-vertex of a path in~$\PF$ lies on no other path in~$\PF$.

By Lemma~\ref{choose_your_start} there is an infinite set $Y_1$ of start-vertices of paths in~$\PF$ together with an infinite set $\RF_1'$ of edge-disjoint rays with distinct start-vertices whose set of start-vertices is precisely $Y_1$.
Moreover, we can ensure that each ray in $\RF'_1$ is included in $\bigcup\RF_1$.
Let $Y_2$ be the set of end-vertices in $X_2$ of those paths in $\PF$ that start in~$Y_1$.
Applying Lemma~\ref{choose_your_start} again, we obtain an infinite set $Z_2\sub Y_2$ together with an infinite set $\RF_2'$ of edge-disjoint rays included in $\bigcup \RF_2$ with distinct start-vertices whose set of start-vertices is precisely $Z_2$.

For each path $P$ in~$\PF$ ending in $Z_2$, there is a double ray in the union of $P$ and the two rays from $\RF'_1$ and $\RF_2'$ that $P$ meets in its end-vertices.
By construction, all these infinitely many double rays are edge-disjoint.
Each of those double rays converges to both $\omega_1$ and $\omega_2$, since each $\omega_i$ is the only end in~$C_i$.
\end{proof}

\begin{rem}
Instead of subdividing edges we also could have worked in the line graph of~$G$.
Indeed, there are infinitely many vertex-disjoint paths in the line graph from $\bigcup\RF_1$ to $\bigcup\RF_2$.
\end{rem}

\section{The `one ended' case}\label{one_end}
We are now going to look at graphs $G$ that contain a thin end $\omega$ such that there are arbitrarily many edge-disjoint double rays converging only to the end $\omega$.
The aim of this section is to prove the following lemma, and to deduce Theorem~\ref{main}.

\begin{lem}\label{one-ended}\label{lem:one_ended}
Let $G$ be a countable graph and let $\omega$ be a thin end of~$G$.
Assume there are arbitrarily many edge-disjoint double rays all of whose rays converge to $\omega$.
Then $G$ has infinitely many edge-disjoint double rays.
\end{lem}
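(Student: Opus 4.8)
The plan is to follow the two-step strategy announced in the introduction, using the end-structure machinery from Section~2. The authors split the one-ended case into passing from \emph{arbitrarily many} to \emph{infinitely many} edge-disjoint $2$-rays into $\omega$, and then from infinitely many edge-disjoint $2$-rays to infinitely many edge-disjoint double rays. Recall a $2$-ray is a pair of vertex-disjoint rays, and every double ray yields a $2$-ray by deleting a finite path; conversely, two disjoint rays into the same end $\omega$ can be ``closed up'' into a double ray because they cannot be separated by a finite cut.

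First I would reduce to the locally finite case. Since $G$ is countable and $\omega$ is thin, I would apply Lemma~\ref{omega-sequence} to obtain a sequence $((A_i,B_i))_{i\in\N}$ of separations of bounded order $k=$ vertex-degree$(\omega)$ that captures $\omega$. The key feature is that each separation has order exactly $k$, the $B_i$ nest down to $\omega$, and the slabs $A_{i+1}\cap B_i$ are connected and finite-boundary. This gives a ``normal form'' for any ray or double ray converging to $\omega$: it must eventually pass through every separator $X_i=A_i\cap B_i$, and because $|X_i|=k$ is bounded, the infinitely many edge-disjoint double rays we are given must crowd through these bounded cuts. This is exactly the setting where a pigeonhole or Ramsey-type argument (in the spirit of Lemma~\ref{simple_ramsey}) forces repetition of the pattern by which double rays cross each $X_i$.

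For the first subsection (arbitrarily many to infinitely many $2$-rays), I would take the given family of $k$-many edge-disjoint double rays for each $k$, truncate each to a $2$-ray, and record for each $2$-ray the finite trace of how its two rays cross each separator $X_i$. Since each $X_i$ has only $k$ vertices and the crossing pattern at $X_i$ is a bounded amount of data, a compactness/König's-lemma argument over the sequence of separators should let me extract an infinite family of edge-disjoint $2$-rays as a ``limit'' of the finite families, much as in the proofs of Lemmas~\ref{two_ends} and~\ref{choose_your_start}. Then, for the second subsection, given infinitely many edge-disjoint $2$-rays into $\omega$, I would pair up the two rays of each (or re-pair rays across different $2$-rays) and join each pair into a double ray using finite linking paths within a single slab $A_{i+1}\cap B_i$; connectivity of these slabs guarantees such links exist, and edge-disjointness of the links can be arranged by working far out towards $\omega$ where fresh edges are available.

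The main obstacle I anticipate is the \emph{edge-disjoint} closing-up in the second step: unlike the vertex-disjoint setting of Halin's original theorem, when I join the two ends of a $2$-ray (or stitch rays together) I must route the connecting finite paths through the bounded separators $X_i$ without reusing edges already spent by the rays themselves or by previously constructed double rays. Because only $k$ vertices are available at each cut but infinitely many rays pass through, controlling edge-congestion there—rather than merely vertex-congestion—is the delicate point, and I expect the argument to exploit that one may always push the linking paths arbitrarily far towards $\omega$, into regions of the slabs untouched by the finitely many double rays built so far, thereby keeping everything edge-disjoint.
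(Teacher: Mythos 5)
Your high-level plan matches the paper's (reduce to a locally finite one-ended graph via Lemma~\ref{lem:locally_finite}, pass from arbitrarily many to infinitely many edge-disjoint $2$-rays, then convert $2$-rays to double rays via Lemma~\ref{lem:2-rays_suffice}), but the core of the hard step is missing and the mechanism you propose for it would fail. You claim that a ``compactness/K\"onig's-lemma argument over the sequence of separators'' lets you extract infinitely many edge-disjoint $2$-rays ``as a limit of the finite families.'' This is precisely the step where Halin-type theorems are nontrivial: the finite families $D_1, D_2, \ldots$ need not be related to one another in any way, so there is no sensible limit object, and K\"onig's lemma would at best produce a single infinite object from finite approximations, not infinitely many pairwise edge-disjoint ones. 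What the paper actually does is combinatorially much more specific: it records, for each $2$-ray, the \emph{shape} it induces on each separator $(A_j,B_j)$ and the \emph{allowed shape} linking consecutive separators (words over the vertices of $A_j\cap B_j$ with letters $l,m,r$); it uses Ramsey and pigeonhole arguments (Lemmas~\ref{same_shape_internal}, \ref{same_shape_external}, \ref{same_allowed_shape}) to arrange that for each $j$ there are $j$ edge-disjoint $2$-rays all inducing the \emph{same} shapes, with the shapes agreeing across consecutive indices; and it then builds the $i$-th new ray by stitching together the segments $s_i^j\cap A_{j+1}\cap B_j$ taken from \emph{different} $2$-rays $d_i^j$ for varying $j$. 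That this stitched union $\SF_i$ actually contains a ray is not automatic: it requires the degree argument of Lemma~\ref{degree_2} (every vertex has degree at most $2$, using the matching of shapes at the interfaces) together with the parity argument of Lemma~\ref{odd_vertices} (an odd number of degree-$1$ vertices forces a component that is a ray rather than a finite path, cycle or double ray). None of this is present in, or recoverable from, your sketch; the phrase ``forces repetition of the pattern'' gestures at the shape bookkeeping but does not supply the gluing construction or the parity certificate.

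There is also a smaller gap in your second step. You propose to close each $2$-ray into a double ray by a linking path ``far out towards $\omega$, into regions of the slabs untouched by the finitely many double rays built so far.'' But the obstruction is not the finitely many double rays already built; it is the infinitely many \emph{other} $2$-rays of the family, all of which cross every sufficiently late slab through the same $k$-vertex separator, so no slab is ``untouched.'' The paper resolves this with Lemma~\ref{finite_connector}: a finite connector of $A_i\cap B_i$ can be chosen meeting at most $2k-2$ of the given edge-disjoint subgraphs in an edge, which lets one discard finitely many $2$-rays and pass to a tailor that is edge-disjoint from infinitely many pairwise vertex-disjoint connectors (Lemma~\ref{infinitely_connectors}). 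You correctly identify edge-congestion at the cuts as the delicate point, but you do not give an argument that overcomes it.
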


We promise that the assumption of countability will not cause problems later.

\subsection{Reduction to the locally finite case}

A key notion for this section is that of a $2$-ray.
A \emph{ $2$-ray} is a pair of vertex-disjoint rays.
For example, from each double ray one obtains a $2$-ray by removing a finite path.

In order to deduce that $G$ has infinitely many edge-disjoint double rays, we will only need that $G$ has arbitrarily many edge-disjoint $2$-rays. In this subsection, we illustrate one advantage of $2$-rays, namely that we may reduce to the case where $G$ is locally finite.

\begin{lem}\label{locally-finite}\label{lem:locally_finite}
Let $G$ be a countable graph with a thin end $\omega$.
Assume there is a countable infinite set $\RF$ of rays all of which converge to $\omega$.

Then there is a locally finite subgraph $H$ of $G$ with a single end which is thin
such that the graph $H$ includes a tail of any $R\in \RF$.
\end{lem}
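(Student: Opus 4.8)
The plan is to build $H$ as a union of carefully chosen tails of the rays in $\RF$, together with finitely many connecting paths placed on a nested family of ``shells'' around $\omega$. Write $\RF=\{R_1,R_2,\dots\}$, fix an enumeration $v_1,v_2,\dots$ of $V(G)$, and assume $G$ connected. First I would record the elementary neighbourhood basis of $\omega$: for every finite $S\sub V(G)$ there is a unique component $C(S,\omega)$ of $G-S$ containing a tail of every ray in $\omega$. Enumerating the finite subsets of $V(G)$ and taking increasing unions gives finite sets $S_0\sub S_1\sub\cdots$ whose $\omega$-components $C_n:=C(S_n,\omega)$ are nested, $C_0\supseteq C_1\supseteq\cdots$, each containing a tail of every $R_i$. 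Note that $\bigcap_n C_n$ is exactly the set of vertices that cannot be separated from $\omega$ by any finite set.

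For each $i$ I would then start the retained tail $R_i'$ of $R_i$ so far out that simultaneously $R_i'\sub C_i$ and $R_i'$ avoids $v_1,\dots,v_i$; both are finitely many constraints on the starting vertex, so such a tail exists. The avoidance condition buys \emph{local finiteness}: a vertex $v_n$ can lie on $R_i'$ only for $i<n$, so each vertex meets only finitely many retained tails and hence has finite degree in $\bigcup_i R_i'$. The containment $R_i'\sub C_i$ controls the shells: for fixed $n$ only the finitely many tails with $i<n$ can leave $C_n$, and each such tail meets $V(G)\setminus C_n$ in a finite set, so $V(\bigcup_i R_i')\setminus C_n$ is finite for every $n$.

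To make $H$ connected with a single end I would add, for each $i\ge 2$, a finite path $P_i\sub C_i$ joining $R_i'$ to a fixed spine ray $R_1'$ (such a path exists since $C_i$ is connected and both rays meet it); since $P_i\sub C_i$, the local-finiteness and shell-control estimates above are unaffected. A little more care — adding on each shell finitely many further paths, chosen level by level to avoid the finitely many vertices already heavily used — makes $H\cap C_n$ connected for every $n$ and, more importantly, prevents any finite set from splitting the deep part of $H$. Granting this, single-endedness follows from the standard criterion that a connected locally finite graph is one-ended once it admits an exhaustion by finite sets whose (connected) complements are nested: here the cofinite connected sets $H\cap C_n$, with finite complements $V(H)\setminus C_n$, are exactly such an exhaustion, so $H-F$ has a unique infinite component for every finite $F$.

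Finally, thinness of the end of $H$ comes for free once $H$ is one-ended: any two rays of $H$ are then equivalent in $H$, hence (as $H\sub G$) equivalent in $G$, so every ray of $H$ lies in $\omega$; a family of vertex-disjoint rays in $H$ is thus a family of vertex-disjoint rays in $\omega$, of size at most the vertex-degree $k$ of $\omega$. The \textbf{main obstacle} is the third step, reconciling local finiteness with single-endedness. The tension is sharpest at vertices dominating $\omega$, i.e. those in $\bigcap_n C_n$: such a vertex may be forced onto infinitely many of the rays, cannot be separated from $\omega$ by any finite set, and so may belong to the cut $F$ while lying in every shell $C_n$ — exactly the case where the naive ``pass to a deeper $C_n$ disjoint from $F$'' argument fails. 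The connecting paths must therefore be arranged so that deleting any finite set, dominating vertices included, cannot break the deep part of $H$ into two infinite pieces, and it is this diagonal/charging bookkeeping that requires the real care.
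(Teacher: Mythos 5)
Your construction of $H_1=\bigcup_i R_i'$ (tails chosen inside nested shells $C_i$ and avoiding $v_1,\dots,v_i$) is exactly the paper's first step, and your local-finiteness and shell-control claims for it are correct. The genuine gap is the single-endedness step, which you yourself flag as the main obstacle but never carry out. Adding one path $P_i$ from each $R_i'$ to the spine $R_1'$ does not suffice: take $G$ to be the infinite ladder with sides $A$ and $B$, and $\RF=\{A,B\}\cup\{\text{tails of }A\}$; then $H_1$ is the disjoint union of a tail of $A$ and a tail of $B$, every $P_i$ with $i\ge 3$ may be chosen inside $A$, and the resulting $H$ is two disjoint rays joined by a single finite path --- a graph with two ends. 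So all the work is hidden in your ``little more care'' clause: specifying the extra per-shell paths and actually proving that $H\cap C_n$ (or a cofinite connected subgraph of it) is connected for every $n$ is precisely the content that is asserted but missing. Moreover your diagnosis of where the difficulty lies is wrong: since your sets $S_n$ exhaust $V(G)$, the intersection $\bigcap_n C_n$ is \emph{empty} (every vertex eventually lies in some $S_n$, hence outside $C_n$), so there are no vertices lying in every shell, and your avoidance condition already ensures each vertex lies on only finitely many retained tails. The true difficulty is that two rays of $H_1$ which both converge to $\omega$ in $G$ may be inequivalent \emph{in $H_1$}, and any finite number of connecting paths between them can be severed by a finite vertex set; equivalence can only be forced by infinitely many pairwise disjoint connections.

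That is how the paper resolves it, using the thinness of $\omega$, which your sketch never invokes at this step: since $\omega$ is thin, $H_1$ has only finitely many ends $\omega_1,\dots,\omega_n$ (rays in distinct ends of $H_1$ have disjoint tails, all lying in $\omega$); picking a ray $S_i$ in each, the rays $S_1$ and $S_i$ are equivalent in $G$ and hence joined by an \emph{infinite} set $\PF_i$ of vertex-disjoint paths, and $H=H_1\cup\bigcup_{i>1}\bigcup\PF_i$ is still locally finite (finitely many path systems, each internally vertex-disjoint) and one-ended: a ray of $H$ either meets some $\PF_i$ in infinitely many edges, or has a tail in $H_1$ converging to some $\omega_i$, and in both cases it is equivalent to $S_1$. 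Your shell-by-shell scheme could in fact be completed without counting the ends of $H_1$ --- for each $n$ and each $i<n$ join the tail component of $R_i'\cap C_n$ to the tail component of $R_1'\cap C_n$ by a path inside $C_n$; local finiteness survives because every vertex lies in only finitely many $C_n$, and the resulting cofinite connected subgraphs give one-endedness --- but as written your proposal stops short of any such argument, so the key step remains unproved.
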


\begin{proof}
Let $(R_i\mid i\in \N)$ be an enumeration of $\RF$.
Let $(v_i\mid i\in \N)$ be an enumeration of the vertices of $G$.
Let $U_i$ be the unique component of ${G\sm \{v_1,\ldots ,v_i\}}$ including a tail of each ray in $\omega$.

For $i\in \N$, we pick a tail $R_i'$ of $R_i$ in $U_i$.
Let $H_1=\bigcup_{i\in \N} R_i'$.
Making use of $H_1$, we shall construct the desired subgraph $H$. Before that, we shall collect some properties of $H_1$.

As every vertex of $G$ lies in only finitely many of the $U_i$, the graph $H_1$ is locally finite.
Each ray in $H_1$ converges to $\omega$ in $G$
since $H_1\sm U_i$ is finite for every $i\in \N$.
Let $\Psi$ be the set of ends of $H_1$.
Since $\omega$ is thin, $\Psi$ has to be finite: $\Psi=\{\omega_1,\ldots,\omega_n\}$.
For each $i\leq n$, we pick a ray $S_i\subseteq H_1$ converging to $\omega_i$.

Now we are in a position to construct $H$.
For any $i>1$, the
rays $S_1$ and $S_i$ are joined by an infinite set $\PF_{i}$ of vertex-disjoint paths in $G$.
We obtain $H$ from $H_1$ by adding all paths in the sets $\PF_{i}$. Since $H_1$ is locally finite, $H$ is locally finite.

It remains to show that every ray $R$ in $H$ is equivalent to $S_1$.
If $R$ contains infinitely many edges from the $\PF_{i}$, then there is a single
$\PF_{i}$ which $R$ meets infinitely, and thus $R$ is equivalent to $S_1$.
Thus we may assume that a tail of $R$ is a ray in $H_1$. So it converges to some $\omega_i\in \Psi$. Since $S_i$ and $S_1$ are equivalent, $R$ and $S_1$ are equivalent, which completes the proof.
\end{proof}

\begin{cor}\label{locally-finite-2-ray}
Let $G$ be a countable graph with a thin end $\omega$ and arbitrarily many edge-disjoint $2$-rays of which all the constituent rays converge to $\omega$. Then there is a locally finite subgraph $H$ of $G$ with a single end, which is thin, such that $H$ has arbitrarily many edge-disjoint $2$-rays.
\end{cor}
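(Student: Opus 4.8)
The plan is to reduce this corollary directly to Lemma~\ref{lem:locally_finite}, the real content having already been done there; the only work here is to package the rays occurring in the given $2$-rays into a single countable family and then check that taking tails preserves the relevant disjointness. Concretely, for each $k\in\N$ I would fix a family $\FF_k$ of $k$ edge-disjoint $2$-rays in $G$, all of whose constituent rays converge to $\omega$, as provided by the hypothesis. Let $\RF$ be the set of all rays that appear as a constituent of some $2$-ray in some $\FF_k$. Since each $\FF_k$ contributes only $2k$ rays and there are countably many $k$, the set $\RF$ is countable; and since for large $k$ the $2k$ pairwise edge-disjoint rays of $\FF_k$ are in particular pairwise distinct, $\RF$ is infinite. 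By hypothesis every ray in $\RF$ converges to $\omega$, so $\RF$ satisfies the assumptions of Lemma~\ref{lem:locally_finite}.

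Applying Lemma~\ref{lem:locally_finite} to this $\RF$ yields a locally finite subgraph $H\subseteq G$ with a single end, which is thin, such that $H$ contains a tail $R^\ast$ of every $R\in\RF$. This $H$ is the candidate subgraph; it remains only to verify that it inherits arbitrarily many edge-disjoint $2$-rays from $G$.

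For the last step I would fix $k$ and pass from the family $\FF_k=\{(R_1,R_1'),\dots,(R_k,R_k')\}$ to the family of tails $\{(R_1^\ast,R_1'^\ast),\dots,(R_k^\ast,R_k'^\ast)\}$, each of which lies in $H$ because every $R_j,R_j'\in\RF$. Taking a tail only deletes vertices and edges, so two observations finish the argument: within each pair the rays $R_j$ and $R_j'$ were vertex-disjoint, hence so are the tails $R_j^\ast$ and $R_j'^\ast$, which therefore still form a $2$-ray; and across the whole family the $2$-rays of $\FF_k$ were pairwise edge-disjoint, hence the $2k$ tails are still pairwise edge-disjoint. Thus $H$ contains $k$ edge-disjoint $2$-rays, and as $k$ was arbitrary, $H$ has arbitrarily many edge-disjoint $2$-rays. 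I do not expect a genuine obstacle here: the subtle part — producing a locally finite one-ended thin subgraph capturing tails of a prescribed countable family of rays — is exactly what Lemma~\ref{lem:locally_finite} supplies, and the remaining point is merely the bookkeeping observation that the properties of being vertex-disjoint and edge-disjoint are preserved under passing to tails.
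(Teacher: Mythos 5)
Your proposal is correct and follows essentially the same route as the paper: the paper's proof is a one-line application of Lemma~\ref{lem:locally_finite} to the set of constituent rays, and your write-up simply makes explicit the (routine) bookkeeping that this set is countably infinite and that vertex-disjointness within pairs and edge-disjointness across pairs survive passing to tails.
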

\begin{proof}
By Lemma~\ref{lem:locally_finite} there is a locally finite graph $H\sub G$ with a single end such that a tail of each of the constituent rays of the arbitrarily many $2$-rays is included in~$H$.
\end{proof}

\subsection{Double rays versus $2$-rays}\label{inf_to_inf}

A connected subgraph of a graph $G$ including a vertex set $S\sub V(G)$ is a \emph{connector} of~$S$ in $G$.

\begin{lem}\label{finite_connector}
Let $G$ be a connected graph and $S$ a finite set of vertices of~$G$.
Let $\HF$ be a set of edge-disjoint subgraphs $H$ of~$G$ such that each connected component of~$H$ meets $S$.
Then there is a finite connector $T$ of~$S$, such that at most $2|S|-2$ graphs from $\HF$ contain edges of~$T$.
\end{lem}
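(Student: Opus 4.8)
The plan is to induct on $|S|$, after first reducing to a contracted graph in which every edge is ``charged'' to a member of $\HF$. First I would \emph{contract} every edge of $G$ that lies in no member of $\HF$ (the \emph{free} edges), collapsing each component of the free-edge subgraph to a single vertex; call the result $G'$ and let $S'$ be the image of $S$, so $|S'|\le|S|$. Contraction preserves connectivity, so $G'$ is connected; every edge of $G'$ now lies in some $H\in\HF$; and since contracting a connected subgraph preserves connectivity of each member-component, every component of every $H$ still meets $S'$. A finite connector of $S'$ in $G'$ lifts to a finite connector of $S$ in $G$ by expanding each used vertex into a finite tree of free edges, touching no new member; hence it suffices to find a finite connector of $S'$ in $G'$ touching at most $2|S'|-2\le 2|S|-2$ members. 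I would then prove the bound for $G'$ by induction on $t:=|S'|$, the base case $t=1$ being a single vertex, which touches $0=2\cdot 1-2$ members.

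For the inductive step ($t\ge 2$) the key is the following reduction claim: \emph{there are two distinct vertices $u,v\in S'$ admitting a finite connector of $\{u,v\}$ that touches at most $2$ members.} Granting this, I take such $u,v$ together with their connector $P$, and contract $u$, $v$ and the finitely many vertices of $P$ to a single vertex, placed in a new distinguished set $S''$ with $|S''|=t-1$. The contracted graph is again connected, its members remain edge-disjoint, and every member-component still meets $S''$ (the property is monotone under identifying vertices), so the induction hypothesis gives a finite connector of $S''$ touching at most $2(t-1)-2$ members. Its union with $P$ is a finite connector of $S'$ touching at most $(2(t-1)-2)+2=2t-2$ \emph{distinct} members; note that any overlap between $P$ and the recursively obtained connector only decreases this count, so no careful bookkeeping of reuse is needed.

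It remains to prove the reduction claim, which is where the hypothesis that every member-component meets $S'$ does the work. If some component $C$ of some member $H$ contains two distinct vertices of $S'$, I connect them by a finite path inside $C$, touching the single member $H$. Otherwise every member-component contains \emph{exactly} one vertex of $S'$, and I label each such component by its unique $S'$-vertex. Consider the auxiliary graph $\mathcal B$ whose vertices are the member-components, with two components adjacent when they share a vertex of $G'$. Since every edge of $G'$ lies in a member and its endpoints then lie in a common component, connectivity of $G'$ forces $\mathcal B$ to be connected; and as $t\ge 2$, every vertex of $S'$ labels some component, so at least two distinct labels occur. Hence some edge of $\mathcal B$ joins two components $C,C'$ with different labels $u\ne v$; choosing $x\in C\cap C'$, a finite $u$--$x$ path in $C$ followed by a finite $x$--$v$ path in $C'$ is a finite connector of $\{u,v\}$ touching at most the two members owning $C$ and $C'$ (possibly the same member, in which case only one).

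The main obstacle is exactly this ``hard case'', where no single member joins two vertices of $S'$: one must show two members always suffice, and it is the property that every member-component meets $S'$ — packaged through the labelled adjacency graph $\mathcal B$ — that guarantees a pair of adjacent, differently-labelled components and hence a two-member bridge between two vertices of $S'$. A secondary point to handle with care is finiteness: members of $\HF$ may be infinite (indeed they will be rays or double rays in the application), so throughout I connect vertices by finite paths \emph{within} a member-component rather than by using whole members, which keeps every connector finite while touching the same members.
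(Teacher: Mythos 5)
Your proof is correct, but it is organised differently from the paper's. The paper grows the connector greedily: starting from $T_0=(S,\emptyset)$ it merges two components of the current forest at each step, and the key observation making each merge cost at most two members is this: enlarge each component $C$ of $T_i$ to $C'$ by adding every member of $\HF$ that meets $C$ (every member meets $S$, hence lies in some $C'$); then a \emph{shortest} path joining two of the enlarged subgraphs can contain no edge of any member, so it extends through at most two members to a genuine $C_1$--$C_2$ path, and after $|S|-1$ merges the bound $2|S|-2$ falls out. Your argument isolates the same ``cost two per merge'' principle but derives it by a different mechanism: you contract the free edges so that every edge is charged to a member, and then either a single member-component contains two vertices of $S'$, or the connected auxiliary graph $\mathcal B$ on member-components must contain an edge with differently labelled endpoints, giving a two-member bridge; your induction then proceeds by contracting the bridge rather than by growing a forest. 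The two mechanisms are close cousins --- a vertex shared by two member-components of $G'$ is precisely the paper's free path between two enlarged components --- but your version turns ``why two members suffice'' into a clean standalone claim, at the price of contraction bookkeeping (loops and parallel edges created by contraction, and lifting connectors from $G'$ back to $G$ and from the contracted graph $G''$ back to $G'$), which you gesture at correctly but which the paper's one-pass construction avoids entirely. One tiny, harmless slip: two \emph{distinct} components of the same member can never share a vertex (components are vertex-disjoint), so in your hard case the two adjacent components of $\mathcal B$ automatically lie in distinct members; the parenthetical ``possibly the same member'' cannot occur, but since it would only lower the count, nothing is affected.
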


\begin{proof}
By replacing $\HF$ with the set of connected components of graphs in $\HF$, if necessary, we may assume that each member of $\HF$ is connected.
We construct graphs $T_i$ recursively for $0\le i< |S|$ such that each $T_i$ is finite and has at most $|S|-i$ components, at most $2i$ graphs from $\HF$ contain edges of~$T_i$, and each component of $T_i$ meets $S$.
Let $T_0= (S,\emptyset)$ be the graph with vertex set $S$ and no edges.
Assume that $T_i$ has been defined.

If $T_{i}$ is connected let $T_{i+1}=T_i$.
For a component $C$ of $T_i$, let $C'$ be the graph obtained from $C$ by adding all graphs from $\HF$ that meet $C$.

As $G$ is connected, there is a path $P$ (possibly trivial) in~$G$ joining two of these subgraphs $C_1'$ and $C_2'$ say.
And by taking the length of~$P$ minimal, we may assume that $P$ does not contain any edge from any $H\in\HF$.
Then we can extend $P$ to a $C_1$--$C_2$ path $Q$ by adding edges from at most two subgraphs from $\HF$ --- one included in $C_1'$ and the other in $C_2'$.
We obtain $T_{i+1}$ from $T_i$ by adding $Q$.

$T=T_{|S|-1}$ has at most one component and thus is connected.
And at most $2|S|-2$ many graphs from $\HF$ contain edges of~$T$.
Thus $T$ is as desired.
\end{proof}

Let $d,d'$ be $2$-rays.
$d$ is a \emph{tail} of $d'$ if each ray of~$d$ is a tail of a ray of~$d'$.
A set $D'$ is a \emph{tailor} of a set $D$ of $2$-rays if each element of $D'$ is a tail of some element of $D$ but no $2$-ray in $D$ includes more than one $2$-ray in $D'$.

\begin{lem}\label{infinitely_connectors}
Let $G$ be a locally finite graph with a single end $\omega$, which is thin. Assume that $G$ contains an infinite set $D=\{d_1,d_2,\dots\}$ of edge-disjoint $2$-rays.

Then $G$ contains an infinite tailor $D'$ of~$D$ and a sequence $((A_i,B_i))_{i\in\N}$ capturing $\omega$ (see Definition~\ref{captures}) such that there is a family of vertex-disjoint connectors $T_i$ of $A_i\cap B_i$ contained in $A_{i+1}\cap B_i$, each of which is edge-disjoint from each member of $D'$.
\end{lem}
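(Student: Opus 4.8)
The plan is to build the capturing sequence first, and then construct the connectors and the tailor simultaneously, paying for each connector with a bounded number of $2$-rays that we are prepared to discard.

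First I would invoke Lemma~\ref{omega-sequence} to obtain a sequence $((A_i,B_i))_{i\in\N}$ capturing $\omega$, and extract the structural facts I will use. Writing $X_i=A_i\cap B_i$ (so $|X_i|=k$, the vertex-degree of $\omega$) and calling $A_{i+1}\cap B_i$ the $i$-th \emph{slab}, the first observation is that every slab is finite. Indeed, the $A_i$ are increasing with $\bigcup_i A_i=G$, and since $\omega$ is the only end no $A_i$ can contain a ray, for such a ray would be separated from a tail in $B_{i+1}$ by the finite set $X_{i+1}$; as each component of $A_i$ meets the finite set $X_i$, this forces $A_i$, and hence each slab, to be finite. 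Consequently each slab meets only finitely many members of $D$, the $2$-rays being edge-disjoint; write $M_i\subseteq D$ for this finite set. Since every $2$-ray converges to $\omega$ and therefore crosses all sufficiently deep separators, we have $\bigcup_i M_i=D$, and in particular $|M_i|\to\infty$.

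The engine of the construction is Lemma~\ref{finite_connector}, applied inside a slab with $S=X_i$ and $\HF$ the traces there of the relevant $2$-rays: it yields a finite connector of $X_i$ inside the slab that is edge-disjoint from all but at most $2k-2$ of those $2$-rays. I would then pass to a subsequence of the capturing sequence --- still capturing $\omega$ by the remark following Lemma~\ref{omega-sequence} --- chosen so that at least $2k-1$ \emph{new} $2$-rays (members of $D$ not meeting any earlier slab) appear in each successive slab; this is possible exactly because $|M_i|\to\infty$. Now I run a recursion over the slabs, maintaining a growing \emph{pool} of two-rays reserved for the tailor. At the $n$-th slab I apply Lemma~\ref{finite_connector} with $\HF$ the traces of the pool together with the $\ge 2k-1$ freshly appearing $2$-rays; the connector $T_n$ then touches at most $2k-2$ of these, so after discarding the touched ones and adjoining the untouched fresh ones the pool grows by at least one at every step. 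To promote a surviving two-ray to a tailor member I take the tail of it that starts beyond every slab in which it is touched by a connector; distinct survivors arise from distinct members of $D$, so the resulting $D'$ is an infinite tailor edge-disjoint from all the $T_n$. Vertex-disjointness of the $T_n$ I would secure by keeping each connector in the interior of its (now thick) slab, away from the far separator, so consecutive connectors meet at most in a separator they can be routed to avoid.

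The main obstacle is keeping the tailor \emph{permanently} infinite: a two-ray persists into every deeper slab, so a member already reserved for the tailor may be demanded again by a later connector. The delicate point is therefore the bookkeeping guaranteeing a permanent supply of untouched two-rays, namely that the at most $2k-2$ two-rays each connector is forced to use can always be charged to fresh, as yet uncommitted two-rays rather than to committed tailor members. This is precisely what the surplus of at least $2k-1$ new two-rays per slab is meant to buy, ensuring the net change of the pool is positive at every stage; the real work lies in verifying that one may simultaneously protect the accumulating pool and still connect each $X_i$ inside its slab --- equivalently, that the protected two-rays never separate $X_i$ within the slab --- which I expect to follow from combining this surplus with the $2k-2$ bound of Lemma~\ref{finite_connector}.
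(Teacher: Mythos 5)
Your overall architecture (capturing sequence first, then connectors built slab-by-slab via Lemma~\ref{finite_connector}, paying for each connector with a bounded number of $2$-rays) is close in spirit to the paper's, but the step you yourself flag as ``the real work'' is a genuine gap, and it is exactly where the paper does something different. Lemma~\ref{finite_connector} gives no control over \emph{which} members of $\HF$ the connector touches --- it is assembled from paths and pieces of the graphs in $\HF$, so it may well eat into your longest-standing committed pool members rather than the fresh ones. Consequently ``net pool growth $\geq 1$ per step'' does not produce even one permanent survivor: if at each step the connector touches the $2k-2$ \emph{oldest} pool members while the $2k-1$ fresh ones are adjoined, the pool size tends to infinity while every individual $2$-ray is eventually discarded (members enter at rate $2k-1$ and leave, first-in-first-out, at rate $2k-2$), so the set of $2$-rays that stay in the pool forever is empty. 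Nor can you force the touches to be charged to uncommitted $2$-rays: protecting committed ones would mean finding the connector in the slab minus their edges, which need not be connected, so Lemma~\ref{finite_connector} does not apply there. The paper avoids this trap by never trying to keep $2$-rays untouched by \emph{all} connectors. It builds a connector $T_i'$ for every $i$ at once, inside $B_i'$ (touching at most $2k-2$ of the $2$-rays meeting $A_i'$), observes that at most $2k-2$ members of $D$ can meet \emph{cofinitely} many of the $T_i'$ in an edge (otherwise a single $T_i'$ would touch $2k-1$ of them), discards these, and then runs a diagonal argument producing an infinite index set $N$ such that each remaining $d_i$ meets only \emph{finitely} many $T_j'$ with $j\in N$. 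Passing to tails --- exactly what the definition of a tailor permits --- then yields $D'$ edge-disjoint from all of these connectors, and a final thinning of $N$ makes the (finite) connectors vertex-disjoint and places each inside a slab of the resulting capturing subsequence. Converting the per-connector bound into a per-$2$-ray finiteness statement, and letting tails absorb the finitely many violations, is the idea your scheme is missing.

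There is also a technical error in how you invoke Lemma~\ref{finite_connector}. Its hypothesis is that \emph{every} component of every $H\in\HF$ meets $S$. With $S=X_{i_n}$ and the connector sought inside the slab $A_{i_{n+1}}\cap B_{i_n}$, this fails for precisely the $2$-rays your scheme depends on: a fresh $2$-ray avoids $A_{i_n}\supseteq X_{i_n}$ altogether, so no component of its trace meets $S$, and a pool member crossing the slab may have trace components attached only to the far separator $X_{i_{n+1}}$. You would need $S=X_{i_n}\cup X_{i_{n+1}}$ (raising the bound to $4k-2$ and the required surplus accordingly), or you must work in the infinite side $B_{i_n}$ as the paper does. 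Relatedly, ``routing the connector away from the far separator'' to obtain vertex-disjointness is not something the lemma offers, though that is easily repaired by using only every other slab. These latter issues are fixable; the pool-permanence gap in the first paragraph is not, within the bookkeeping you describe.
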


\begin{proof}
Let $k$ be the vertex-degree of $\omega$.
By Lemma~\ref{omega-sequence} there is a sequence $((A_i',B'_i))_{i\in\N}$ capturing $\omega$.
By replacing each $2$-ray in $D$ with a tail of itself if necessary, we may assume that for all $(r,s)\in D$ and $i\in\N$ either both $r$ and $s$ meet $A'_i$ or none meets $A'_i$.
By Lemma~\ref{finite_connector} there is a finite connector $T'_i$ of $A'_i\cap B'_i$ in the connected graph $B'_i$ which meets in an edge at most $2k-2$ of the $2$-rays of~$D$ that have a vertex in $A'_i$.

Thus, there are at most $2k-2$ $2$-rays in~$D$ that meet all but finitely many of the $T_i'$ in an edge.
By throwing away these finitely many $2$-rays in~$D$ we may assume that each $2$-ray in~$D$ is edge-disjoint from infinitely many of the $T'_i$.
So we can recursively build a sequence $N_1,N_2,\dots$ of infinite sets of natural numbers such that $N_i\supseteq N_{i+1}$, the first $i$ elements of $N_i$ are all contained in $N_{i+1}$, and $d_i$ only meets finitely many of the $T'_j$ with $j\in N_i$ in an edge.
Then $N=\bigcap_{i\in\N} N_i$ is infinite and has the property that each $d_i$ only meets finitely many of the $T'_j$ with $j\in N$ in an edge.
Thus there is an infinite tailor $D'$ of $D$ such that no $2$-ray from $D'$ meets any $T'_j$ for $j\in N$ in an edge.

We recursively define a sequence $n_1,n_2,\dots$ of natural numbers by taking $n_i\in N$ sufficiently large that $B'_{n_i}$ does not meet $T'_{n_j}$ for any $j<i$
.
Taking $(A_i,B_i)=(A'_{n_i},B'_{n_i})$ and $T_i= T'_{n_i}$ gives the desired sequences.
\end{proof}

\begin{lem}\label{lem_2-rays_to_double_rays}\label{lem:2-rays_suffice}
If a locally finite graph $G$ with a single end $\omega$ which is thin contains infinitely many edge-disjoint $2$-rays, then $G$ contains infinitely many edge-disjoint double rays.
\end{lem}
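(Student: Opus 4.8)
The plan is to apply Lemma~\ref{infinitely_connectors} to the infinite family of edge-disjoint $2$-rays supplied by the hypothesis, and then to turn each $2$-ray into a double ray by splicing its two rays together, deep inside $G$ near $\omega$, through one of the connectors. Concretely, let $D=\{d_1,d_2,\dots\}$ be an infinite set of edge-disjoint $2$-rays in $G$. Lemma~\ref{infinitely_connectors} yields an infinite tailor $D'=\{d_1',d_2',\dots\}$ of $D$, a sequence $((A_i,B_i))_{i\in\N}$ capturing $\omega$, and vertex-disjoint connectors $T_i$ of $X_i=A_i\cap B_i$ with $T_i\sub A_{i+1}\cap B_i$, each $T_i$ edge-disjoint from every member of $D'$. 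Since distinct elements of $D'$ are tails of distinct, hence edge-disjoint, members of $D$, the $2$-rays in $D'$ are themselves pairwise edge-disjoint. The overall idea is to assign a distinct connector to each $2$-ray in $D'$ and use it to join the two rays into a double ray.

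For the construction, write $d_n'=(r_n,s_n)$, so $r_n$ and $s_n$ are vertex-disjoint rays converging to $\omega$. Because the separations capture $\omega$, the sets $A_i$ increase to $G$ and $X_i$ separates the two sides of $(A_i,B_i)$; consequently a ray converging to $\omega$ meets $X_i$ for every sufficiently large $i$ (its start lies in some $A_j$, while a tail lies in every $B_i$). Hence I would choose strictly increasing indices $i_1<i_2<\cdots$ such that both $r_n$ and $s_n$ meet $X_{i_n}$. Let $a_n$ and $b_n$ be the last vertices of $r_n$ and $s_n$ on $X_{i_n}$; beyond these the two rays stay in $B_{i_n}$ and still converge to $\omega$. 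As $T_{i_n}$ is a connector of $X_{i_n}$, it contains an $a_n$--$b_n$ path $Q_n$. Splicing the $\omega$-tail of $r_n$ from $a_n$, the path $Q_n$, and the $\omega$-tail of $s_n$ from $b_n$, I extract from their union a double ray $R_n$ whose two ends both converge to $\omega$; extracting a genuine simple two-way infinite path from this union is routine.

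It then remains to verify that the $R_n$ are pairwise edge-disjoint, which follows directly from the bookkeeping of Lemma~\ref{infinitely_connectors}: every edge of $R_n$ lies either in $d_n'$ or in $T_{i_n}$. For $m\neq n$, the $2$-rays $d_n'$ and $d_m'$ are edge-disjoint, each $T_i$ is edge-disjoint from $D'$, and $T_{i_n}$ and $T_{i_m}$ are vertex-disjoint (as $i_n\neq i_m$) and hence edge-disjoint. Thus no edge is shared between $R_n$ and $R_m$, so $\{R_n\mid n\in\N\}$ is the desired infinite family of edge-disjoint double rays.

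The genuine content has already been carried out in Lemma~\ref{infinitely_connectors}; given it, this step is essentially assembly. The only points that require care are that the two rays of each $2$-ray reach a \emph{common} separator, so that a single connector can join them, and that the splicing paths do not interfere across different double rays. Both are handled by the same device: choosing the connector indices $i_n$ distinct and deep enough that $r_n$ and $s_n$ have already crossed $X_{i_n}$, and exploiting that the connectors are vertex-disjoint and avoid all of $D'$. I expect the most delicate bookkeeping to be exactly this choice of a common crossing level together with the verification that the spliced object is a genuine double ray rather than merely a connected union of two rays and a path.
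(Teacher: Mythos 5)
Your proposal is correct and follows essentially the same route as the paper: apply Lemma~\ref{infinitely_connectors}, then pair each $2$-ray with a fresh connector that meets both of its rays and splice through it, with edge-disjointness coming from the pairwise edge-disjointness of the $2$-rays, the vertex-disjointness of the connectors, and the connectors being edge-disjoint from the $2$-rays. The only difference is cosmetic: you make explicit the choice of a common crossing level $X_{i_n}$ and the splicing path, where the paper simply observes that infinitely many connectors meet both rays of each $2$-ray and that the union then includes a double ray.
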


\begin{proof}
Applying Lemma~\ref{infinitely_connectors} we get an infinite set $D$ of edge-disjoint $2$-rays, a sequence $((A_i,B_i))_{i\in\N}$ capturing $\omega$, and connectors $T_i$ of $A_i\cap B_i$  for each $i\in\N$ such that the $T_i$ are vertex-disjoint from each other and edge-disjoint from all members of~$D$.

We shall construct the desired set of infinitely many edge-disjoint double rays as a nested union of sets $D_i$. We construct the $D_i$ recursively.
Assume that a set $D_i$ of $i$ edge-disjoint double rays has been defined such that each of its members is included in the union of a single $2$-ray from $D$ and one connector $T_j$.
Let $d_{i+1}\in D$ be a $2$-ray distinct from the finitely many $2$-rays used so far.
Let $C_{i+1}$ be one of the infinitely many connectors that is different from all the finitely many connectors used so far and that meets both rays of $d_{i+1}$.
Clearly, $d_{i+1}\cup C_{i+1}$ includes a double ray $R_{i+1}$.
Let $D_{i+1}=D_i\cup\{R_{i+1}\}$.
The union $\bigcup_{i\in\N} D_i$ is an infinite set of edge-disjoint double rays as desired.
\end{proof}

\subsection{Shapes and allowed shapes}\label{arb_to_inf}
Let $G$ be a graph and $(A,B)$ a separation of~$G$.
A \emph{shape} for $(A,B)$ is a word $v_1x_1v_2x_2\dots x_{n-1}v_n$ with $v_i\in A\cap B$ and $x_i\in\{l,r\}$ such that no vertex appears twice.
We call the $v_i$ the \emph{vertices} of the shape.
Every ray $R$ \emph{induces} a shape $\sigma=\sigma_R(A,B)$ on every separation $(A,B)$ of finite order in the following way:
Let $<_R$ be the \emph{natural order} on $V(R)$ induced by the ray, where $v<_R w$ if $w$ lies in the unique infinite component of $R-v$.
The vertices of~$\sigma$ are those vertices of~$R$ that lie in~$A\cap B$ and they appear in $\sigma$ in the order given by $<_R$.
For $v_i,v_{i+1}$ the path $v_i R v_{i+1}$ has edges only in $A$ or only in $B$ but not in both.
In the first case we put $l$ between $v_i$ and $v_{i+1}$ and in the second case we put $r$ between $v_i$ and $v_{i+1}$.

Let $(A_1,B_1),(A_2,B_2)$ be separations with $A_1\cap B_2=\emptyset$ and thus also $A_1\sub A_2$ and $B_2\sub B_1$.
Let $\sigma_i$ be a nonempty shape for $(A_i,B_i)$.
The word $\tau = v_1x_1v_2\dots x_{n-1}v_n$ is an \emph{allowed shape linking $\sigma_1$ to $\sigma_2$} with {\em vertices} $v_1 \ldots v_n$ if the following holds.
\begin{itemize}
\item $v$ is a vertex of $\tau$ if and only if it is a vertex of $\sigma_1$ or $\sigma_2$,
\item if $v$ appears before $w$ in $\sigma_i$, then $v$ appears before $w$ in $\tau$,
\item $v_1$ is the initial vertex of $\sigma_1$ and $v_n$ is the terminal vertex of $\sigma_2$,
\item $x_i\in\{l,m,r\}$,
\item the subword $vlw$ appears in $\tau$ if and only if it appears in $\sigma_1$,
\item the subword $vrw$ appears in $\tau$ if and only if it appears in $\sigma_2$,
\item $v_i\neq v_j$ for $i\neq j$.
\end{itemize}

Each ray $R$ defines a word $\tau=\tau_R[(A_1,B_1),(A_2,B_2)]= v_1x_1v_2\ldots x_{n-1}v_{n}$ with vertices $v_i$ and $x_i\in\{l,m,r\}$ as follows.
The vertices of~$\tau$ are those vertices of~$R$ that lie in~$A_1\cap B_1$ or $A_2\cap B_2$ and they appear in $\tau$ in the order given by $<_R$.
For $v_i,v_{i+1}$ the path $v_i R v_{i+1}$ has edges either only in $A_1$, only in $A_2\cap B_1$, or only in $B_2$.
In the first case we set $x_i=l$ and $\tau$ contains the subword $v_ilv_{i+1}$.
In the second case we set $x_i=m$ and $\tau$ contains the subword $v_imv_{i+1}$.
In the third case we set $x_i=r$ and $\tau$ contains the subword $v_irv_{i+1}$.

For a ray $R$ to induce an allowed shape $\tau_R[(A_1,B_1),(A_2,B_2)]$ we need at least that $R$ starts in~$A_2$.
However, each ray in~$\omega$ has a tail such that whenever it meets an $A_i$ it also starts in that $A_i$.
Let us call such rays \emph{lefty}. A $2$-ray is \emph{lefty} if both its rays are.

\begin{rem}
Let $(A_1,B_1)$, and $(A_2,B_2)$ be two separations of finite order with $A_1\subseteq A_2$, and $B_2\subseteq B_1$. For every lefty ray $R$ meeting $A_1$, the word $\tau_R[(A_1,B_1),(A_2,B_2)]$ is an allowed shape linking $\sigma_R(A_1,B_1)$ and $\sigma_R(A_2,B_2)$.\qed
\end{rem}

From now on let us fix a locally finite graph $G$ with a thin end $\omega$ of vertex-degree $k$.
And let $((A_i,B_i))_{i\in\N}$ be a sequence capturing $\omega$ such that each member has order $k$.

A \emph{$2$-shape} for a separation $(A,B)$ is a pair of shapes for $(A,B)$.
Every $2$-ray induces a $2$-shape coordinatewise in the obvious way.
Similarly, an \emph{allowed $2$-shape} is a pair of allowed shapes.

Clearly, there is a global constant $c_1\in\N$ depending only on $k$ such that there are at most $c_1$ distinct $2$-shapes for each separation $(A_i,B_i)$.
Similarly, there is a global constant $c_2\in\N$ depending only on $k$ such that for all $i,j\in\N$ there are at most $c_2$ distinct allowed $2$-shapes linking a $2$-shape for $(A_i,B_i)$ with a $2$-shape for $(A_j,B_j)$.

For most of the remainder of this subsection we assume that for every $i\in\N$ there is a set $D_i$ consisting of at least $c_1\cdot c_2\cdot i$ edge-disjoint $2$-rays in~$G$. Our aim will be to show that in these circumstances there must be infinitely many edge-disjoint 2-rays.

By taking a tailor if necessary, we may assume that every $2$-ray in each $D_i$ is lefty.

\begin{lem}\label{same_shape_internal}
There is an infinite set $J\sub \N$ and, for each $i\in\N$, a tailor $D_i'$ of $D_i$ of cardinality $c_2\cdot i$ such that for all $i\in\N$ and $j\in J$ all $2$-rays in $D'_i$ induce the same $2$-shape $\sigma[i,j]$ on $(A_j,B_j)$.
\end{lem}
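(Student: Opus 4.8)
The plan is to combine a pigeonhole over $2$-shapes with a diagonal argument over the separations, spending exactly one factor of $c_1$ so that the surviving tailor still has the required size $c_2\cdot i$.

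First I would record the basic counting input: for any finite family of lefty $2$-rays and any single separation $(A_j,B_j)$, the induced $2$-shape takes at most $c_1$ values, so the relation ``induces the same $2$-shape on $(A_j,B_j)$'' partitions the family into at most $c_1$ blocks. Thus, for a fixed $i$, the map sending $j\in\N$ to the partition of $D_i$ induced by agreement of $2$-shapes on $(A_j,B_j)$ takes only finitely many values, since there are only finitely many partitions of the finite set $D_i$. By pigeonhole this map is constant on some infinite set, and on that set one block has size at least $|D_i|/c_1\ge c_2\cdot i$; all $2$-rays in this block agree on $(A_j,B_j)$ for every $j$ in that infinite set.

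Next I would make the index set common to all $i$. Recursively I build a nested sequence $\N=J_0\supseteq J_1\supseteq J_2\supseteq\cdots$ of infinite sets, where $J_i$ is chosen inside $J_{i-1}$ so that the agreement partition of $D_i$ is constant along $J_i$; I then take $J=\{j_1<j_2<\cdots\}$ to be a diagonal set with $j_i\in J_i$. For each $i$ I select a block $D_i''$ as above, of size at least $c_2\cdot i$. Since $j_m\in J_m\subseteq J_i$ for all $m\ge i$, the family $D_i''$ induces a single $2$-shape on $(A_j,B_j)$ for every $j\in J$ with $j\ge j_i$; the only indices of $J$ on which agreement is not yet guaranteed are the finitely many $j_1,\dots,j_{i-1}$.

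The main obstacle is exactly this initial segment: a single $J$ cannot lie inside every $J_i$, so for each $i$ finitely many small separations are left over. To repair these I would pass from $D_i''$ to a genuine tailor $D_i'$ by replacing each $2$-ray with a lefty tail lying so far into $\omega$ that neither of its rays meets the separators $A_{j_m}\cap B_{j_m}$ for $m<i$; on those separations every member of $D_i'$ then induces the empty $2$-shape, and empty shapes trivially agree, so I may take $\sigma[i,j]$ to be this empty $2$-shape on the initial indices and the block's common $2$-shape on the rest. The delicate point -- and the part I expect to need the structure of a capturing sequence rather than soft counting -- is to carry out this deep tailoring without destroying the agreement already secured on the separations $(A_j,B_j)$ with $j\ge j_i$: because a lefty ray need not cross the nested separators monotonically, one must use that consecutive separators are linked through the connected regions $A_{i+1}\cap B_i$ of Definition~\ref{captures} to start the tails beyond $A_{j_{i-1}}\cap B_{j_{i-1}}$ yet before the relevant crossings of the later separators, so that the later $2$-shapes are preserved while the earlier ones are emptied.
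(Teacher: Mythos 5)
Your counting and diagonalization steps are fine and follow the paper's strategy, but the last step---where you pass from $D_i''$ to a tailor $D_i'$ \emph{after} having secured agreement on the later separators---has a genuine gap, and the repair you sketch does not work. The problem is that a lefty ray need not visit the separators in a monotone order: a ray $r$ may start in $A_{j_{i-1}}$, make an excursion to a vertex $w\in A_{j_m}\cap B_{j_m}$ for some $m\ge i$, return to $A_{j_{i-1}}$, and only then leave $A_{j_{i-1}}$ for good. This is compatible with $r$ being lefty, and nothing in Definition~\ref{captures}---in particular not the connectedness of the regions $A_{n+1}\cap B_n$---rules it out; if rays did cross the separators monotonically, every shape would consist of a single vertex and the whole shape formalism would be unnecessary. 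For such a ray there is no starting point that is ``beyond $A_{j_{i-1}}\cap B_{j_{i-1}}$ yet before the relevant crossings of the later separators'': any tail avoiding the early separators must start after the return from the excursion and therefore misses $w$, so it induces a different $2$-shape on $(A_{j_m},B_{j_m})$ than the original ray did, destroying the agreement you established for $D_i''$. Re-pigeonholing among the tails is not an option either, since $D_i''$ has only $c_2\cdot i$ elements and you cannot afford to lose another factor of $c_1$.

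The fix is to reverse the order of the two operations at each stage, which is what the paper does: at stage $i$, \emph{first} replace every $2$-ray in $D_i$ by a tail avoiding $A_\ell$, where $\ell$ is the relevant earlier index (the $(i-1)$st smallest element of $J_{i-1}$ in the paper's bookkeeping, $j_{i-1}$ in yours), and only \emph{then} run the pigeonhole argument on these tails to extract $J_i\sub J_{i-1}$ and an agreeing subfamily of size $c_2\cdot i$. The shapes being counted are then already the shapes of the final tails, the empty shape on the early separators comes for free, and nothing needs to be ``preserved''. With that reordering your argument goes through and is essentially the paper's proof (the paper stabilises the first $i-1$ elements of the $J_i$ and takes $J=\bigcap_{i\in\N} J_i$ rather than a diagonal set, and pigeonholes on the chosen block $S_j\sub D_i$ rather than on the whole agreement partition, but these differences are cosmetic).
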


\begin{proof}
We recursively build infinite sets $J_i\sub\N$ and tailors $D_i'$ of $D_i$ such that for all $k\le i$ and $j\in J_i$ all $2$-rays in $D'_k$ induce the same $2$-shape on $(A_j,B_j)$.
For all $i\ge 1$, we shall ensure that $J_i$ is an infinite subset of $J_{i-1}$ and that the $i-1$ smallest members of $J_i$ and $J_{i-1}$ are the same. We shall take $J$ to be the intersection of all the $J_i$.

Let $J_0=\N$ and let $D_0'$ be the empty set.
Now, for some $i\ge 1$, assume that sets $J_k$ and $D_k'$ have been defined for all $k<i$.
By replacing $2$-rays in $D_i$ by their tails, if necessary, we may assume that each $2$-ray in $D_i$ avoids $A_\ell$, where $\ell$ is the $(i-1)$st smallest value of $J_{i-1}$.
As $D_i$ contains $c_1\cdot c_2\cdot i$ many $2$-rays, for each $j\in J_{i-1}$ there is a set $S_j\sub D_i$ of size at least $c_2\cdot i$ such that each $2$-ray in $S_j$ induces the same $2$-shape on $(A_j,B_j)$.
As there are only finitely many possible choices for $S_j$, there is an infinite subset $J_i$ of $J_{i-1}$ on which $S_j$ is constant.
For $D_i'$ we pick this value of $S_j$.
Since each $d\in D_i'$ induces the empty $2$-shape on each $(A_k,B_k)$ with $k\le \ell$ we may assume that the first $i-1$ elements of $J_{i-1}$ are also included in $J_i$.

It is immediate that the set $J=\bigcap_{i\in\N} J_i$ and the $D_i'$ have the desired property.
\end{proof}

\begin{lem}\label{same_shape_external}
There are two strictly increasing sequences $(n_i)_{i\in\N}$ and $(j_i)_{i\in\N}$ with $n_i\in\N$ and $j_i\in J$ for all $i\in\N$ such that $\sigma[n_i,j_i]=\sigma[n_{i+1},j_i]$ and $\sigma[n_{i},j_i]$ is not empty.
\end{lem}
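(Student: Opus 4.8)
The plan is to read the family of $2$-shapes $\sigma[i,j]$ (for $i\in\N$, $j\in J$) produced by Lemma~\ref{same_shape_internal} as an infinite matrix and to extract the required staircase by a greedy construction, where the only real issue is guaranteeing that the construction never gets stuck. First I would record two facts about this matrix. \emph{(A)} For each fixed row $i$, the shape $\sigma[i,j]$ is nonempty for all sufficiently large $j\in J$: the set $D_i'$ consists of finitely many lefty $2$-rays converging to $\omega$, and for large $j$ each constituent ray has its start in $A_j\smallsetminus B_j$ (since $\bigcup_i A_i=G$ and $\bigcap_i B_i=\emptyset$) while keeping infinitely many vertices in $B_j$, so it crosses $A_j\cap B_j$ and the common induced shape is nonempty. \emph{(B)} For each fixed column $j$, only finitely many rows $i$ have $\sigma[i,j]$ nonempty: in the proof of Lemma~\ref{same_shape_internal} the $2$-rays of $D_i'$ avoid $A_{\ell_i}$ with $\ell_i\to\infty$, so $\sigma[i,j]$ is empty whenever $A_j\subseteq A_{\ell_i}$, which fails for only finitely many $i$.

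Fact~(B) is what makes the statement delicate: in every single column all but finitely many entries are empty, so the columns $j_i$ are genuinely forced to increase, and no infinite reservoir of rows agreeing at a fixed column can be maintained; a naive greedy extension could therefore stall if the current row's shape never recurs in a later row. I would rule this out by a counting argument. Call a row $i$ \emph{blocked at $j$} if $\sigma[i,j]$ is nonempty but no row $i'>i$ satisfies $\sigma[i',j]=\sigma[i,j]$; equivalently, $i$ is the largest index in its value class at column $j$. Since each column has at most $c_1$ distinct nonempty $2$-shapes, it has at most $c_1$ blocked rows. Now call a row \emph{bad} if it is blocked at all sufficiently large $j\in J$. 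If there were $c_1+1$ bad rows, then by~(A) one could choose a single large column at which all $c_1+1$ are nonempty and hence blocked, contradicting the bound of $c_1$ blocked rows per column. Thus there are at most $c_1$ bad rows, and every other row is \emph{good}, meaning that for infinitely many $j\in J$ some larger-index row agrees with it (nonempty-ly).

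With this the construction is routine. Let $B^{\ast}$ bound the finitely many bad rows and start with any $n_1>B^{\ast}$, which is automatically good. Having chosen $n_1<\dots<n_i$ and $j_1<\dots<j_{i-1}$ with the required agreements, note that $n_i>B^{\ast}$ is good, hence non-blocked at infinitely many columns; choose such a column $j_i>j_{i-1}$ in $J$ together with a row $n_{i+1}>n_i$ with $\sigma[n_{i+1},j_i]=\sigma[n_i,j_i]$, which is nonempty since $n_i$ is non-blocked there. As $n_{i+1}>n_i>B^{\ast}$, the new row is again good, so the recursion never terminates and produces strictly increasing sequences $(n_i)$ and $(j_i)$ with $\sigma[n_i,j_i]=\sigma[n_{i+1},j_i]$ nonempty, exactly as required.

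I expect the main obstacle to be precisely the continuation step isolated above: because Fact~(B) forbids the usual ``refine an infinite set of rows'' recursion, the crux is the observation that there are only boundedly many ``dead-end'' (bad) rows, which can be counted against the at-most-$c_1$ blocked rows in any single column; once one starts the staircase above all of them, continuation is free.
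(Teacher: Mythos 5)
Your proof is correct, but it reaches the conclusion by a genuinely different route from the paper's. The paper applies the infinite Ramsey theorem to an auxiliary graph on the row indices, in which $v$ and $w$ are adjacent when $\sigma[v,j]=\sigma[w,j]$ for infinitely many $j\in J$; since at most $c_1$ shapes occur in any column this graph has no independent set of size $c_1+1$, hence it has an infinite clique, and after re-indexing so that all rows pairwise agree at infinitely many columns the $j_i$ are chosen greedily exactly as in your final step (with the non-emptiness of $\sigma[i,j_i]$ coming from your Fact~(A), which the paper leaves implicit). You replace the Ramsey step by the counting of blocked and bad rows: each column has at most $c_1$ last representatives of nonempty value classes, so at most $c_1$ rows are permanent dead ends, and every row beyond them can be continued. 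This is more elementary (no Ramsey needed) and yields a cofinite rather than merely infinite set of usable rows, at the price of a weaker invariant --- each good row only has \emph{some} later partner agreeing with it at infinitely many columns, rather than all pairs agreeing --- which is still entirely sufficient for the greedy construction. Your Fact~(B) is true (the tails chosen in the construction of the $D_i'$ avoid $A_{\ell_i}$ with $\ell_i\to\infty$) and correctly explains why one cannot refine an infinite reservoir of rows at a fixed column, though it plays no logical role in your argument. One tiny imprecision: to contradict the bound of $c_1$ blocked rows per column you should choose a column beyond all the finitely many thresholds witnessing badness of the $c_1+1$ rows; the appeal to~(A) at that point is superfluous, since being blocked already entails being nonempty.
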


\begin{proof}
Let $H$ be the graph on $\N$ with an edge $vw\in E(H)$ if and only if there are infinitely many elements $j\in J$ such that $\sigma[v,j]=\sigma[w,j]$.

As there are at most $c_1$ distinct $2$-shapes for any separator $(A_i,B_i)$, there is no independent set of size $c_1+1$ in~$H$ and thus no infinite one.
Thus, by Ramsey's theorem, there is an infinite clique in~$H$.
We may assume without loss of generality that $H$ itself is a clique by moving to a subsequence of the $D'_i$ if necessary.
With this assumption we simply pick $n_i=i$.

Now we pick the $j_i$ recursively.
Assume that $j_i$ has been chosen.
As $i$ and $i+1$ are adjacent in~$H$, there are infinitely many indicies $\ell\in\N$ such that $\sigma[i,\ell]=\sigma[i+1,\ell]$.
In particular, there is such an $\ell> j_i$ such that $\sigma[i+1,\ell]$ is not empty.
We pick $j_{i+1}$ to be one of those $\ell$.

Clearly, $(j_i)_{i\in\N}$ is an increasing sequence and $\sigma[i,j_i]=\sigma[{i+1},j_i]$ as well as $\sigma[{i},j_i]$ is non-empty for all $i\in\N$, which completes the proof.
\end{proof}

By moving to a subsequence of $(D_i')$ and $((A_j,B_j))$, if necessary, we may assume by Lemma~\ref{same_shape_internal} and Lemma~\ref{same_shape_external} that for all $i,j\in\N$ all $d\in D_i'$ induce the same $2$-shape $\sigma[i,j]$ on $(A_j,B_j)$, and that $\sigma[i,i]=\sigma[{i+1},i]$, and that $\sigma[i,i]$ is non-empty.

\begin{lem}\label{same_allowed_shape}
For all $i\in\N$ there is $D_i''\sub D_i'$ such that $|D_i''|= i$, and all $d\in D_i''$ induce the same allowed $2$-shape $\tau[i]$ that links $\sigma[i,i]$ and $\sigma[i,i+1]$.
\end{lem}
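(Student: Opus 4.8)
Looking at Lemma~\ref{same_allowed_shape}, I need to show that for each $i$, among the $c_2\cdot i$ many $2$-rays in $D_i'$, at least $i$ of them induce the same allowed $2$-shape $\tau[i]$ linking $\sigma[i,i]$ and $\sigma[i,i+1]$. The setup has already arranged (by the reduction immediately preceding the lemma) that every $d\in D_i'$ induces the fixed $2$-shape $\sigma[i,i]$ on $(A_i,B_i)$ and the fixed $2$-shape $\sigma[i,i+1]$ on $(A_{i+1},B_{i+1})$. Since all the $2$-rays are lefty, the Remark guarantees that each $d\in D_i'$ does induce \emph{some} allowed $2$-shape linking these two fixed $2$-shapes. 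So the only question is how many distinct such allowed $2$-shapes can occur.

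Let me look at the key constant. The plan is to invoke the global bound $c_2$: by the definition of $c_2$ stated earlier, for all indices there are at most $c_2$ distinct allowed $2$-shapes linking a $2$-shape for $(A_i,B_i)$ with a $2$-shape for $(A_{i+1},B_{i+1})$. Every $2$-ray $d\in D_i'$ induces one of these at most $c_2$ allowed $2$-shapes. Since $|D_i'| = c_2\cdot i$, a straightforward pigeonhole argument shows that one of these at most $c_2$ allowed $2$-shapes is induced by at least $c_2\cdot i / c_2 = i$ of the $2$-rays in $D_i'$.

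**Carrying it out.** First I would fix $i$ and recall that, by the standing assumption just established, all $d\in D_i'$ induce the same $2$-shape $\sigma[i,i]$ on $(A_i,B_i)$ and the same $2$-shape $\sigma[i,i+1]$ on $(A_{i+1},B_{i+1})$. Next, because each $d\in D_i'$ is lefty, the Remark tells me that the word $\tau_d[(A_i,B_i),(A_{i+1},B_{i+1})]$ induced by each ray of $d$ is an allowed shape linking the corresponding coordinates, so $d$ induces an allowed $2$-shape linking $\sigma[i,i]$ and $\sigma[i,i+1]$. Then I would partition $D_i'$ according to which allowed $2$-shape each $d$ induces; by the definition of $c_2$ there are at most $c_2$ classes. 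Finally, since $|D_i'| = c_2\cdot i$, by pigeonhole one class has size at least $i$; I take $D_i''$ to be (an $i$-element subset of) that class, and set $\tau[i]$ to be the common allowed $2$-shape.

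**Where the work really lies.** The argument itself is a clean pigeonhole, so there is no serious combinatorial obstacle here; the entire weight of the lemma was front-loaded into Lemma~\ref{same_shape_internal} and Lemma~\ref{same_shape_external}, which normalized the $2$-shapes on the two separations to be constant across $D_i'$. The one point requiring a little care is the appeal to leftyness: I must make sure that every $d\in D_i'$ genuinely induces an \emph{allowed} $2$-shape (and not merely some word $\tau_d$ that fails the allowed-shape conditions), which is exactly what the Remark guarantees for lefty rays meeting $A_i$. Since $\sigma[i,i]$ is non-empty by the standing assumption, each such ray meets $A_i$, so the Remark applies and the pigeonhole input is valid.
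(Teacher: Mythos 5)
Your proof is correct and follows essentially the same route as the paper's: leftyness (via the Remark) guarantees each $2$-ray of $D_i'$ induces an allowed $2$-shape linking $\sigma[i,i]$ and $\sigma[i,i+1]$, and since there are at most $c_2$ such allowed $2$-shapes while $|D_i'|\ge c_2\cdot i$, pigeonhole yields the desired $D_i''$ of size $i$. Your added remark about why each ray meets $A_i$ is a harmless elaboration of a point the paper treats at the same level of detail.
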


\begin{proof}
Note that it is in this proof that we need all the $2$-rays in $D_i''$ to be lefty as they need to induce an allowed $2$-shape that links $\sigma[i,i]$ and $\sigma[i,i+1]$ as soon as they contain a vertex from $A_i$.
As $|D_i'|\ge i\cdot c_2 $ and as there are at most $c_2$ many distinct allowed $2$-shapes that link $\sigma[i,i]$ and $\sigma[i,i+1]$ there is $D_i''\sub D_i'$ with $|D_i''|= i$ such that all $d\in D_i''$ induce the same allowed $2$-shape.
\end{proof}

We enumerate the elements of $D_j''$ as follows: $d^j_1, d^j_2, \dots, d^j_j$.
Let $(s_i^j,t_i^j)$ be a representation of $d_i^j$.
Let $S_i^j=s^j_i\cap A_{j+1}\cap B_j$, and let $\SF_i=\bigcup_{j\ge i} S_i^j$.
Similarly, let $T_i^j=t_i^j\cap A_{j+1}\cap B_j$, and let $\TF_i=\bigcup_{j\ge i} T_i^j$.

Clearly, $\SF_i$ and $\TF_i$ are vertex-disjoint and any two graphs in $\bigcup_{i\in\N}\{\SF_i,\TF_i\}$ are edge-disjoint.
We shall find a ray $R_i$ in each of the $\SF_i$ and a ray $R_i'$ in each of the $\TF_i$.
The infinitely many pairs $(R_i,R_i')$ will then be edge-disjoint $2$-rays, as desired.

\begin{lem}\label{degree_2}
Each vertex $v$ of $\SF_i$ has degree at most $2$.
If $v$ has degree $1$ it is contained in $A_i\cap B_i$.
\end{lem}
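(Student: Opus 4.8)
The plan is to analyse $\SF_i=\bigcup_{j\ge i}S_i^j$ one ``slab'' at a time, thinking of $S_i^j=s_i^j\cap(A_{j+1}\cap B_j)$ as living in the slab $A_{j+1}\cap B_j$, which is bounded on the left by the separator $A_j\cap B_j$ and on the right by $A_{j+1}\cap B_{j+1}$. First I would record how the pieces overlap. Since $A_\ell\cap B_{\ell+1}=\emptyset$ for all $\ell$ (Definition~\ref{captures}), two slabs $A_{j+1}\cap B_j$ and $A_{j'+1}\cap B_{j'}$ are vertex-disjoint as soon as $|j-j'|\ge 2$, while consecutive slabs meet exactly in the separator $A_{j+1}\cap B_{j+1}$. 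Hence the degree of any vertex $v$ of $\SF_i$ is the sum of its degrees in at most two of the pieces, and those two are the pieces of adjacent slabs sharing the separator on which $v$ lies.

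Next I would localise the possible loose ends of a single piece $S_i^j$. Because $(A_j,B_j)$ and $(A_{j+1},B_{j+1})$ are genuine separations, an edge of $s_i^j$ incident with a vertex $v$ of the slab that leaves the slab to the left forces $v\in A_j\cap B_j$, and one leaving to the right forces $v\in A_{j+1}\cap B_{j+1}$; so every vertex of $S_i^j$ in the \emph{interior} of the slab keeps both of its $s_i^j$-neighbours and thus has degree $2$. Leftiness enters here: as $\sigma[j,j]$ is nonempty, $s_i^j$ meets $A_j\cap B_j\subseteq A_j$, hence starts in $A_j$, and since $A_j$ meets the slab only in $A_j\cap B_j$ the start of $s_i^j$ cannot lie in the slab interior. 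Consequently every degree-$1$ vertex of $S_i^j$ lies on one of the two boundary separators $A_j\cap B_j$ or $A_{j+1}\cap B_{j+1}$.

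The heart of the argument is the count at an internal shared separator $A_{j+1}\cap B_{j+1}$ with $j\ge i$, which is the right boundary of the slab carrying $S_i^j$ and the left boundary of the slab carrying $S_i^{j+1}$. For $v$ in this separator the separation structure shows that $\deg_{S_i^j}(v)$ counts exactly the edges of $s_i^j$ at $v$ pointing into the $A_{j+1}$-side, while $\deg_{S_i^{j+1}}(v)$ counts exactly the edges of $s_i^{j+1}$ at $v$ pointing into the $B_{j+1}$-side. Now the point of the shape-coordination set up in Lemmas~\ref{same_shape_internal} and~\ref{same_shape_external} is that the two incident pieces induce the \emph{same} shape on $(A_{j+1},B_{j+1})$; since both $s_i^j$ and $s_i^{j+1}$ are lefty and converge to $\omega$, they approach this separator from the $A_{j+1}$-side and leave it towards $\omega$ on the $B_{j+1}$-side, so the split of $v$'s incident edges into ``left'' and ``right'' is the same for both rays. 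Thus the two contributions are precisely the left-degree and the right-degree of $v$ in this common shape, and their sum is the full degree of $v$ in a single ray traversal, namely $2$ (it is not a ray-start, because by leftiness $s_i^j$ started in $A_j$ while $A_j\cap B_{j+1}=\emptyset$). Hence every vertex on an internal separator has degree exactly $2$, and in particular none has degree $1$.

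Finally, a vertex $v$ on the leftmost separator $A_i\cap B_i$ of $\SF_i$ lies only in $S_i^i$, because the slab $A_i\cap B_{i-1}$ to its left does not occur in $\SF_i$; so its degree is just $\deg_{S_i^i}(v)\le 2$, and this is the one place a degree-$1$ vertex can survive, lying in $A_i\cap B_i$ as claimed. Combining the three cases---slab interiors (degree $2$), internal separators (degree $2$), and the leftmost separator (degree $\le 2$, the sole source of degree $1$)---yields both assertions. I expect the main obstacle to be the bookkeeping in the internal-separator count: one must check carefully, using that $(A_j,B_j)$ and $(A_{j+1},B_{j+1})$ are separations and that $A_j\cap B_{j+1}=\emptyset$, that each piece retains exactly its inward edges at a boundary vertex, so that matched shapes make the two side-contributions genuinely complementary rather than overlapping.
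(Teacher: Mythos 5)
Your proof is correct and follows essentially the same route as the paper: decompose $\SF_i$ into the slab pieces $S_i^j$, note each vertex lies in at most two of them, use leftiness to confine the leaves of each piece to its boundary separators, and use the agreement of shapes on a shared separator to see that the two adjacent pieces contribute complementary degrees summing to $2$, leaving $A_i\cap B_i$ as the only possible home of a degree-$1$ vertex. The only difference is cosmetic: where you observe directly that the $A$-side and $B$-side degrees of $v$ in the common shape sum to $2$, the paper runs through the four subword cases $lvl, lvr, rvl, rvr$ via the allowed shapes $\tau[j-1]$ and $\tau[j]$.
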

\begin{proof}
Clearly, each vertex $v$ of $\SF_i$ that does not lie in any separator $A_j\cap B_j$ has degree $2$, as it is contained in precisely one $S_i^j$, and all the leaves of $S_i^j$ lie in $A_j\cap B_j$ and $A_{j+1}\cap B_{j+1}$ as $d_i^j$ is lefty.
Indeed, in $S_i^j$ it is an inner vertex of a path and thus has degree $2$ in there.
If $v$ lies in $A_i\cap B_i$ it has degree at most $2$, as it is only a vertex of $S_i^j$ for one value of $j$, namely $j = i$.

Hence, we may assume that $v\in A_j\cap B_j$ for some $j>i$.
Thus, $\sigma[j,j]$ contains $v$ and $l:\sigma[j,j]:r$ contains precisely one of the four following subwords:
$$
lvl, lvr, rvl, rvr
$$
(Here we use the notation $p:q$ to denote the concatenation of the word $p$ with the word $q$.)
In the first case $\tau[j-1]$ contains $mvm$ as a subword and $\tau[j]$ has no $m$ adjacent to $v$.
Then $S_i^{j-1}$ contains precisely $2$ edges adjacent to $v$ and $S_i^j$ has no such edge.
The fourth case is the first one with $l$ and $r$ and $j$ and $j-1$ interchanged.

In the second and third cases,  each of $\tau[j-1]$ and $\tau[j]$ has precisely one $m$ adjacent to $v$.
So both $S_i^{j-1}$ and $S_i^j$ contain precisely $1$ edge adjacent to $v$.

As $v$ appears only as a vertex of $S_i^\ell$ for $\ell = j$ or $\ell = j-1$, the degree of $v$ in $\SF_i$ is~$2$.
\end{proof}

\begin{lem}\label{odd_vertices}
There are an odd number of vertices in $\SF_i$ of degree $1$.
\end{lem}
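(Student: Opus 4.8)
The plan is to reduce the statement to a parity count for the single innermost annulus, and then to settle that count by reading the left ray as a word in two letters that records which side of the separation $(A_i,B_i)$ each edge lies on.

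First I would identify which vertices of $\SF_i$ can have degree $1$. By Lemma~\ref{degree_2} every degree-$1$ vertex lies in $A_i\cap B_i$; write $X_i=A_i\cap B_i$. For $v\in X_i$ the pieces $S_i^j$ with $j>i$ cannot contain $v$, since $S_i^j\subseteq B_{i+1}$ while $A_i\cap B_{i+1}=\emptyset$ by Definition~\ref{captures}. Hence the degree of such a $v$ in $\SF_i$ equals its degree in $S_i^i$, and the degree-$1$ vertices of $\SF_i$ are exactly the endpoints of $S_i^i$ lying in $X_i$. Call this number $\alpha_i$; the task is to show $\alpha_i$ is odd.

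Next I would pin down the shape of $S_i^i$. The left ray $s:=s_i^i$ converges to $\omega$, so a tail of it lies in $B_{i+1}$, which meets the annulus $A_{i+1}\cap B_i$ only in the finite set $X_{i+1}=A_{i+1}\cap B_{i+1}$ (using $B_{i+1}\subseteq B_i$). Thus $S_i^i=s\cap(A_{i+1}\cap B_i)$ is finite, a disjoint union of paths whose endpoints are the places where $s$ enters or leaves the annulus, namely vertices of $X_i\cup X_{i+1}$; moreover, if $s$ starts inside the annulus then, being lefty with $\sigma[i,i]\neq\emptyset$, its start lies in $A_i$ and hence in $X_i$. So every endpoint of $S_i^i$ lies in $X_i\cup X_{i+1}$.

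Finally I would run the parity argument. Classify each edge of $s$ as an $l$-edge if it lies in $A_i$ and an $r$-edge if it lies in $B_i$ (these are exclusive, as $(A_i,B_i)$ is a separation). A vertex $v\in X_i$ is an endpoint of $S_i^i$ exactly when precisely one of its incident $s$-edges lies in the annulus; and for $v\in X_i$ an incident edge lies in the annulus iff it is an $r$-edge, because $v\notin B_{i+1}$ (again $A_i\cap B_{i+1}=\emptyset$) forces every edge at $v$ into $A_{i+1}$, while an $l$-edge, lying in $A_i$, misses $B_i$ and hence the annulus. Hence $\alpha_i$ counts the vertices of $s$ at which an $l$-edge meets an $r$-edge, that is, the \emph{switches} of the $\{l,r\}$-word traced by $s$. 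Since $s$ is lefty and meets $A_i$, it starts in $A_i$, so its first edge is an $l$-edge (or its start lies on $X_i$); since $s$ converges to $\omega$ while $A_i\cap B_{i+1}=\emptyset$, all but finitely many of its edges are $r$-edges. A word that begins with $l$ and ends with $r$ has an odd number of switches, giving $\alpha_i$ odd. The one point needing care — and the only real obstacle — is the bookkeeping when the start of $s$ lies on $X_i$ rather than strictly to the left of it: there the first edge may itself be an $r$-edge, and one must check that the contribution of the start vertex restores the correct parity. Handling this boundary case by a short distinction on whether the first edge is an $l$-edge or an $r$-edge completes the proof.
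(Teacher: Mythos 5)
Your proposal is correct and follows essentially the same route as the paper: reduce to vertices of $A_i\cap B_i$ via Lemma~\ref{degree_2}, observe that only $S_i^i$ is relevant there, and count sign changes between $A_i$-edges and $B_i$-edges along $s_i^i$, which is odd because the lefty ray starts on the $A_i$-side and has a tail on the $B_i$-side. The boundary case you flag (the ray starting at a vertex of $A_i\cap B_i$) is real but resolves exactly as you sketch; the paper absorbs it silently by padding the shape word to $l:\sigma[i,i]:r$, so that the start vertex is always read as preceded by an $l$.
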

\begin{proof}
By Lemma~\ref{degree_2} we have that each vertex of degree $1$ lies in $A_i\cap B_i$.
Let $v$ be a vertex in $A_i\cap B_i$.
Then, $\sigma[i,i]$ contains $v$ and $l:\sigma[i,i]:r$ contains precisely one of the four following subwords:
$$
lvl, lvr, rvl, rvr
$$
In the first and fourth case $v$ has even degree.
It has degree $1$ otherwise.
As $l:\sigma[i,i]:r$ starts with $l$ and ends with $r$, the word $lvr$ appear precisely once more than the word $rvl$.
Indeed, between two occurrences of $lvr$ there must be one of $rvl$ and vice versa.
Thus, there are an odd number of vertices with degree $1$ in~$\SF_i$.
\end{proof}

\begin{lem}\label{includes_ray}
$\SF_i$ includes a ray.
\end{lem}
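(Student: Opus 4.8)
The plan is to read off the result from the two structural lemmas already proved, via a parity argument. By Lemma~\ref{degree_2} the graph $\SF_i$ has maximum degree $2$, and by Lemma~\ref{odd_vertices} it has an odd number of vertices of degree $1$. Since a graph of maximum degree $2$ decomposes into components each of which is an isolated vertex, a finite path, a finite cycle, a ray, or a double ray, the whole argument reduces to counting degree-$1$ vertices across these component types and comparing parities.

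First I would record the component classification, which is the only place the degree bound from Lemma~\ref{degree_2} enters: every connected graph with all degrees at most $2$ is one of the five types above. Next I would count degree-$1$ vertices by type. An isolated vertex, a finite cycle, and a double ray each contribute $0$; a finite path (on at least two vertices) contributes exactly $2$; and a ray contributes exactly $1$, namely its start vertex. Consequently the total number of degree-$1$ vertices of $\SF_i$ is congruent modulo $2$ to the number of components that are rays.

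Since Lemma~\ref{odd_vertices} guarantees this total is odd, the number of ray-components must be odd, hence at least one. Any such component is a ray included in $\SF_i$, which is exactly the assertion of the lemma. Thus the proof is essentially immediate once the two preceding lemmas are in hand.

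The only points that need care are minor. For the parity statement to be meaningful one must know there are only finitely many degree-$1$ vertices; this is immediate from Lemma~\ref{degree_2}, which forces every degree-$1$ vertex of $\SF_i$ into the finite separator $A_i\cap B_i$, bounding their number by $k$. The one genuinely delicate case in the classification is the degenerate path on a single vertex, which must be counted as an isolated vertex contributing a degree-$0$ (not degree-$1$) vertex; getting this bookkeeping right is the main obstacle, but it does not affect the parity count, so the conclusion stands.
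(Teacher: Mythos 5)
Your proposal is correct and is essentially the paper's own argument: both use Lemma~\ref{degree_2} to bound degrees by $2$ and Lemma~\ref{odd_vertices} to get an odd count of degree-$1$ vertices, then conclude by parity that some component must be a ray. The paper phrases it by locating a component with an odd (hence exactly one) number of degree-$1$ vertices rather than summing over the component classification, but this is the same reasoning.
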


\begin{proof}
By Lemma~\ref{degree_2} every vertex of~$\SF_i$ has degree at most $2$ and thus every component of $\SF_i$ has at most two vertices of degree $1$.
By Lemma~\ref{odd_vertices} $\SF_i$ has a component $C$ that contains an odd number of vertices with degree $1$.
Thus $C$ has precisely one vertex of degree $1$ and all its other vertices have degree $2$, thus $C$ is a ray.
\end{proof}

\begin{Cor}\label{lem:inf_2-rays}
$G$ contains infinitely many edge-disjoint $2$-rays.
\end{Cor}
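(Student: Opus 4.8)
The plan is to assemble the rays produced by Lemma~\ref{includes_ray} into the desired infinite family of edge-disjoint $2$-rays; essentially all the work has already been done, so this is a matter of bookkeeping. First I would observe that everything proved about $\SF_i$ in Lemmas~\ref{degree_2}, \ref{odd_vertices}, and~\ref{includes_ray} applies verbatim to $\TF_i$. The definitions of $S_i^j$ and $T_i^j$ are completely symmetric, being obtained from the two constituent rays $s_i^j$ and $t_i^j$ of the $2$-ray $d_i^j$, and the degree and parity bookkeeping carries over coordinatewise. Hence $\TF_i$ also includes a ray, which I call $R_i'$, while Lemma~\ref{includes_ray} itself supplies a ray $R_i\sub\SF_i$.

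Next I would check that each pair $(R_i,R_i')$ is genuinely a $2$-ray. Since $\SF_i$ and $\TF_i$ are vertex-disjoint, the rays $R_i\sub\SF_i$ and $R_i'\sub\TF_i$ are vertex-disjoint, which is exactly the defining property of a $2$-ray. Finally I would verify edge-disjointness across the family: because any two of the graphs in $\bigcup_{i\in\N}\{\SF_i,\TF_i\}$ are edge-disjoint, the four subgraphs $\SF_i,\TF_i,\SF_{i'},\TF_{i'}$ are pairwise edge-disjoint whenever $i\neq i'$, and so the $2$-rays $(R_i,R_i')$ and $(R_{i'},R_{i'}')$ share no edge. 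In particular the $R_i$ are pairwise distinct, whence $\{(R_i,R_i')\mid i\in\N\}$ is an infinite set of pairwise edge-disjoint $2$-rays in $G$, as required.

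There is no genuinely hard step here, since the substantive content lives in the degree and parity properties of $\SF_i$ established above. The one point that deserves a sentence of care is the appeal to symmetry for $\TF_i$: one should confirm that the roles of $l$ and $r$ in the coordinate shape of $\sigma[j,j]$ and of the marks $m$ in $\tau[j]$ play out identically for the second coordinate, so that the parity count of Lemma~\ref{odd_vertices} again yields exactly one vertex of degree $1$ in the relevant component of $\TF_i$, and hence a ray.
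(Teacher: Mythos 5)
Your argument is correct and is essentially identical to the paper's own proof: both invoke the symmetry between $\SF_i$ and $\TF_i$ to get a ray in each, use the already-established vertex-disjointness of $\SF_i$ and $\TF_i$ to conclude the pair is a $2$-ray, and use the pairwise edge-disjointness of the graphs in $\bigcup_{i\in\N}\{\SF_i,\TF_i\}$ for edge-disjointness of the family. Your extra remark about checking that the $l$/$r$/$m$ bookkeeping carries over coordinatewise is a reasonable elaboration of what the paper compresses into ``by symmetry.''
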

\begin{proof}
By symmetry, Lemma~\ref{includes_ray} is also true with $\TF_i$ in place of $\SF_i$.
Thus $\SF_i\cup\TF_i$ includes a $2$-ray $X_i$.
The $X_i$ are edge-disjoint by construction.
\end{proof}

Recall that Lemma~\ref{lem:one_ended} states that a countable graph with a thin end $\omega$ and arbitrarily many edge-disjoint double rays all whose subrays converge to $\omega$, also has infinitely many edge-disjoint double rays.
We are now in a position to prove this lemma.

\begin{proof}[Proof of Lemma~\ref{lem:one_ended}]
By Lemma~\ref{lem:2-rays_suffice} it suffices to show that~$G$ contains a subgraph $H$ with a single end which is thin such that $H$ has infinitely many edge-disjoint $2$-rays.
By Corollary~\ref{locally-finite-2-ray}, $G$ has a subgraph $H$ with a single end which is thin such that $H$ has arbitrarily many edge-disjoint $2$-rays.
But then by the argument above $H$ contains infinitely many edge-disjoint $2$-rays, as required.
\end{proof}

With these tools at hand, the remaining proof of Theorem~\ref{main} is easy.
Let us collect the results proved so far to show that each graph with arbitrarily many edge-disjoint double rays also has infinitely many edge-disjoint double rays.

\begin{proof}[Proof of Theorem~\ref{main}]
Let $G$ be a graph that has a set $D_i$ of~$i$ edge-disjoint double rays for each $i\in\N$.
Clearly, $G$ has infinitely many edge-disjoint double rays if its subgraph $\bigcup_{i\in\N}D_i$ does, and thus we may assume without loss of generality that $G=\bigcup_{i\in\N}D_i$.
In particular, $G$ is countable.

By Corollary~\ref{lem:inf_ends} we may assume that each connected component of~$G$ includes only finitely many ends.
As each component includes a double ray we may assume that $G$ has only finitely many components.
Thus, there is one component containing arbitrarily many edge-disjoint double rays, and thus we may assume that $G$ is connected.

By Corollary~\ref{lem:inf_vx_degree} we may assume that all ends of~$G$ are thin.
Thus, as mentioned at the start of Section~\ref{two_ended}, there is a pair of ends $(\omega,\omega')$ of $G$ (not necessarily distinct) such that $G$ contains arbitrarily many edge-disjoint double rays each of which converges precisely to $\omega$ and $\omega'$.
This completes the proof as, by Lemma~\ref{lem:two_ends} $G$ has infinitely many edge-disjoint double rays if $\omega$ and $\omega'$ are distinct and by Lemma~\ref{lem:one_ended} $G$ has infinitely many edge-disjoint double rays if $\omega=\omega'$.
\end{proof}

\section{Outlook and open problems}\label{outlook}
We will say that a graph $H$ is \emph{edge-ubiquitous} if every graph having arbitrarily many edge-disjoint $H$ also has infinitely many edge-disjoint $H$.

Thus Theorem~\ref{main} can be stated as follows:
the double ray is edge-ubiquitous.
Andreae's Theorem implies that the ray is edge-ubiquitous.
And clearly, every finite graph is edge-ubiquitous.

We could ask which other graphs are edge-ubiquitous.
It follows from our result that the $2$-ray is edge-ubiquitous.
Let $G$ be a graph in which there are arbitrarily many edge-disjoint $2$-rays.
Let $v*G$ be the graph obtained from $G$ by adding a vertex $v$ adjacent to all vertices of~$G$.
Then $v*G$ has arbitrarily many edge-disjoint double rays, and thus infinitely many
edge-disjoint double rays.
Each of these double rays uses $v$ at most once and thus includes a $2$-ray of~$G$.

The vertex-disjoint union of $k$ rays is called a \emph{$k$-ray}.
The $k$-ray is edge-ubiquitous.
This can be proved with an argument similar to that for Theorem~\ref{main}:
Let $G$ be a graph with arbitrarily many  edge-disjoint $k$-rays.
The same argument as in Corollaries~\ref{lem:inf_ends} and~\ref{lem:inf_vx_degree} shows that we may assume that $G$ has only finitely many ends, each of which is thin.
By removing a finite set of vertices if necessary we may assume that each component of~$G$
has at most one end, which is thin.
Now we can find numbers $k_C$ indexed by the components $C$ of $G$ and summing to $k$ such that each component $C$ has arbitrarily many edge-disjoint $k_C$-rays.
Hence, we may assume that $G$ has only a single end, which is thin.
By Lemma~\ref{lem:locally_finite} we may assume that $G$ is locally finite.

In this case, we use an argument as in Subsection~\ref{arb_to_inf}. It is necessary to use $k$-shapes instead of $2$-shapes but other than that we can use the same combinatorial principle. If $C_1$ and $C_2$ are finite sets, a {\em $(C_1, C_2)$-shaping} is a pair $(c_1, c_2)$ where $c_1$ is a partial colouring of $\N$ with colours from $C_1$ which is defined at all but finitely many numbers and $c_2$ is a colouring of $\N^{(2)}$ with colours from $C_2$ (in our argument above, $C_1$ would be the set of all $k$-shapes and $C_2$ would be the set of all allowed $k$-shapes for all pairs of $k$-shapes).

\begin{lem}\label{lem:principal}
 Let $D_1, D_2, \ldots$ be a sequence of sets of $(C_1, C_2)$-shapings where $D_i$ has size $i$. Then there are strictly increasing sequences $i_1, i_2, \ldots$ and $j_1, j_2, \ldots$ and subsets $S_n \subseteq D_{i_n}$ with $|S_n| \geq n$ such that
\begin{itemize}
 \item for any $n \in \N$ all the values of $c_1(j_n)$ for the shapings $(c_1, c_2) \in S_{n-1} \cup S_n$ are equal (in particular, they are all defined).
\item for any $n \in N$, all the values of $c_2(j_n, j_{n+1})$ for the shapings $(c_1, c_2) \in S_n$ are equal.
\end{itemize}

\end{lem}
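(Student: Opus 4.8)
The plan is to lift the three combinatorial lemmas \ref{same_shape_internal}, \ref{same_shape_external} and \ref{same_allowed_shape} to this abstract setting: the finite set $C_1$ plays the role of the set of $k$-shapes, the partial colouring $c_1$ plays the role of the map $j\mapsto\sigma[i,j]$, and the total colouring $c_2$ of $\N^{(2)}$ plays the role of the allowed $k$-shapes $\tau[i]$. Throughout write $a=|C_1|$ and $b=|C_2|$. The three phases of the proof correspond exactly to the three cited lemmas, followed by a reindexing that absorbs the size losses into the sequence $(i_n)$.

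First I would carry out an internal stabilisation of $c_1$, as in the proof of Lemma~\ref{same_shape_internal}. Recursively one builds a nested sequence of infinite sets $J_1\supeq J_2\supeq\cdots$ of natural numbers, sharing longer and longer initial segments, together with subsets $E_i\sub D_i$ with $|E_i|\ge |D_i|/a=i/a$, such that for every $i$ and every $j\in J:=\bigcap_i J_i$ there is a single value $\gamma(i,j)\in C_1$ with $c_1(j)=\gamma(i,j)$ for all $(c_1,c_2)\in E_i$. At the $i$-th step one first discards the finitely many numbers at which some shaping in $D_i$ leaves $c_1$ undefined (this is where the hypothesis that each $c_1$ is defined at all but finitely many numbers enters, corresponding to passing to tails avoiding $A_\ell$ in Lemma~\ref{same_shape_internal}); then for each $j\in J_{i-1}$ a pigeonhole argument yields a subset of $D_i$ of size $\ge i/a$ on which $c_1(j)$ is constant, and since there are only finitely many candidate subsets one thins $J_{i-1}$ to an infinite $J_i$ on which this subset is constant, taking $E_i$ to be that constant value.

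Next I would run the Ramsey argument of Lemma~\ref{same_shape_external}. Define an auxiliary graph $H$ on $\N$ joining $v$ and $w$ whenever $\gamma(v,j)=\gamma(w,j)$ for infinitely many $j\in J$. Among any $a+1$ vertices some pair is joined, since for each of the infinitely many $j\in J$ two of the vertices receive the same colour from $C_1$ while there are only finitely many pairs; hence $H$ has no independent set of size $a+1$ and, by Ramsey's theorem, contains an infinite clique. I enumerate a subclique as $i_1<i_2<\cdots$, thinned so that $i_n\ge ab\cdot n$. One then chooses positions $j_1<j_2<\cdots$ in $J$ recursively: given $j_n$, the edge $i_ni_{n+1}$ of $H$ provides infinitely many $\ell\in J$ with $\gamma(i_n,\ell)=\gamma(i_{n+1},\ell)$, and $j_{n+1}$ is taken to be such an $\ell$ exceeding $j_n$. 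This guarantees $\gamma(i_{n-1},j_n)=\gamma(i_n,j_n)$ for every $n$.

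Finally I would stabilise $c_2$ at the chosen pairs, as in Lemma~\ref{same_allowed_shape}. For each $n$ the map $(c_1,c_2)\mapsto c_2(j_n,j_{n+1})$ on $E_{i_n}$ takes at most $b$ values, so by pigeonhole some value is attained on a subset $S_n\sub E_{i_n}$ with $|S_n|\ge |E_{i_n}|/b\ge i_n/(ab)\ge n$. These $S_n$ are as required: the second bullet holds by the choice of $S_n$; for the first, every shaping of $S_n\sub E_{i_n}$ gives $c_1(j_n)=\gamma(i_n,j_n)$ and every shaping of $S_{n-1}\sub E_{i_{n-1}}$ gives $c_1(j_n)=\gamma(i_{n-1},j_n)$, and these two values agree by the choice of $j_n$ (in particular both are defined, since $j_n\in J$); the case $n=1$ is vacuous with the convention $S_0=\emptyset$. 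The main obstacle is the first, internal, stabilisation step: forcing a single infinite set $J$ to work simultaneously for all the $D_i$ while $c_1$ is only a partial colouring requires the delicate nested diagonalisation over positions, whereas the Ramsey and pigeonhole steps are routine.
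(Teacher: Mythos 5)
Your three-phase plan is exactly the method the paper points to (the paper itself only gestures at the proof), and your second and third phases are fine, but the internal stabilisation step contains a genuine gap: the intermediate claim that there is a single infinite set $J=\bigcap_i J_i$ such that for \emph{every} $i$ and \emph{every} $j\in J$ a value $\gamma(i,j)\in C_1$ exists with $c_1(j)=\gamma(i,j)$ on all of $E_i$ is false in general. Take $C_1=\{0\}$ and let every shaping in $D_i$ have $c_1$ undefined precisely on $\{1,\dots,i\}$: then for any nonempty $E_i\subseteq D_i$ and any $j\le i$ there is no such $\gamma(i,j)$, so any infinite $J$ fails for all $i\ge\min J$, even though the conclusion of the lemma holds trivially here (choose $j_n>i_n$). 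The construction breaks exactly at the point you flag as delicate, because discarding the finitely many undefined positions is not the analogue of passing to tails. In Lemma~\ref{same_shape_internal} the tails are taken so that every $2$-ray in $D_i$ induces the \emph{empty} shape on the first $i-1$ members of $J_{i-1}$; it is this forced constancy at the \emph{old} positions that allows those positions to survive into $J_i$ and hence makes $\bigcap_i J_i$ infinite. Discarding positions does the opposite: it removes them (and the union of the removed sets over all $i$ can be all of $\N$), and nothing in your construction makes $c_1(j)$ constant on $E_i$ at the old positions $j$; forcing that by further pigeonholing would cost a factor exponential in $i$, which $|D_i|=i$ cannot absorb.

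The lemma is true and your argument is repairable by a change of bookkeeping rather than of ideas: do not intersect the $J_i$. Keep only that the $J_i$ are infinite and nested and that for each $i$ and each $j\in J_i$ the colour $c_1(j)$ is constant on $E_i$, where \emph{undefined} is treated as one extra colour (so $|E_i|\ge i/(a+1)$ and the independent-set bound in the Ramsey step becomes $a+2$). Run the Ramsey argument with the quantifier ``for infinitely many $j\in J_{\max(v,w)}$'', which is meaningful because the sets are nested, and in the recursion choose $j_{n+1}\in J_{i_{n+1}}$ with $j_{n+1}>j_n$ and $\gamma(i_n,j_{n+1})=\gamma(i_{n+1},j_{n+1})\in C_1$; this is possible because for a fixed finite $E_i$ only finitely many $j$ have $c_1(j)$ undefined for some of its shapings. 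Since each $j_n$ is only ever required to work for the two indices $i_{n-1}$ and $i_n$, this weaker stabilisation feeds into your phases two and three verbatim.
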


Lemma \ref{lem:principal} can be proved by the same method with which we constructed the sets $D_i''$ from the sets $D_i$. The advantage of Lemma~\ref{lem:principal} is that it can not only be applied to $2$-rays but also to more complicated graphs like $k$-rays.

A \emph{talon} is a tree with a single vertex of degree $3$ where all the other vertices have degree $2$. An argument as in Subsection~\ref{inf_to_inf} can be used to deduce that talons are edge-ubiquitous from the fact that $3$-rays are.
However, we do not know whether the graph in Figure~\ref{frog} is edge-ubiquitous.

\begin{figure}[h]
\begin{center}
\includegraphics{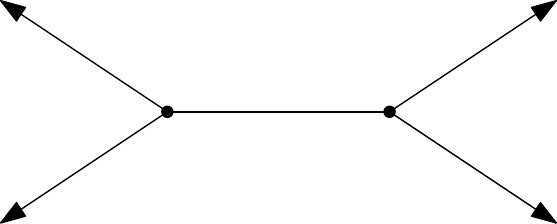}
\end{center}
\caption{A graph obtained from $2$ disjoint double rays, joined by a single edge. Is this graph edge-ubiquitous?}\label{frog}
\end{figure}

We finish with the following open problem.

\begin{problem}
 Is the directed analogue of Theorem~\ref{main} true? More precisely: Is it true that if a directed graph has arbitrarily many edge-disjoint directed double rays, then it has
infinitely many edge-disjoint directed double rays?
\end{problem}

It should be noted that if true the directed analogue would be a common generalization
of Theorem~\ref{main} and the fact that double rays are ubiquitous with respect to the subgraph relation.

\section{Acknowledgement}
We appreciate the helpful and accurate comments of a referee.
\bibliographystyle{plain}
\bibliography{collective}
\end{document}